\newtheorem{thm}{Theorem}[section]
\newtheorem{claim}{Claim}
\newtheorem{rem}{Remark}[section]
\newtheorem{lem}{Lemma}[section]
\newtheorem{pro}{Proposition}[section]
\theoremstyle{definition}
\begin{document}
\title{An improvement of sufficient condition for $k$-leaf-connected graphs\footnote{Supported by National Natural Science Foundation of China 
(Nos. 11971445 and 12271439),
Natural Science Foundation of Henan Province (No. 202300410377) and
Research Program of Science and Technology at Universities of Inner Mongolia Autonomous Region (No. NJZY22280).}}
\author{{\bf Tingyan Ma$^{a}$}, {\bf Guoyan Ao$^{b, c}$}\thanks{Corresponding author. E-mail addresses: matingylw@163.com, aoguoyan@163.com,
rfliu@zzu.edu.cn, lgwangmath@163.com, yhumath@163.com.},
{\bf Ruifang Liu$^{b}$}, {\bf Ligong Wang$^{a}$}, {\bf Yang Hu$^{a}$} \\
{\footnotesize $^a$ School of Mathematics and Statistics, Northwestern Polytechnical University, Xi'an, Shaanxi 710129, China} \\
{\footnotesize $^b$ School of Mathematics and Statistics, Zhengzhou University, Zhengzhou, Henan 450001, China} \\
{\footnotesize $^c$ School of Mathematics and Statistics, Hulunbuir University, Hailar, Inner Mongolia 021008, China}}
\date{}

\date{}
\maketitle
{\flushleft\large\bf Abstract}
For integer $k\geq2,$ a graph $G$ is called $k$-leaf-connected  if $|V(G)|\geq k+1$ and given any subset $S\subseteq V(G)$ with $|S|=k,$ $G$ always has a spanning tree $T$ such that $S$ is precisely the set of leaves of $T.$ Thus a graph is $2$-leaf-connected if and only if it is Hamilton-connected.
In this paper, we present a best possible condition based upon the size to guarantee a graph to be $k$-leaf-connected, which not only improves the results
of Gurgel and Wakabayashi [On $k$-leaf-connected graphs, J. Combin. Theory Ser. B 41 (1986) 1-16] and Ao, Liu, Yuan and Li [Improved sufficient conditions for $k$-leaf-connected graphs, Discrete Appl. Math. 314 (2022) 17-30], but also extends the result of Xu, Zhai and Wang [An improvement of spectral conditions for Hamilton-connected graphs, Linear Multilinear Algebra, 2021].
Our key approach is showing that an $(n+k-1)$-closed non-$k$-leaf-connected graph must contain a large clique
if its size is large enough. As applications, sufficient conditions for a graph to be $k$-leaf-connected in terms of the (signless Laplacian) spectral radius of $G$ or its complement are also presented.

\begin{flushleft}
\textbf{Keywords:} $k$-leaf-connected, Hamilton-connected, spectral radius, signless Laplacian, closure, complement

\end{flushleft}
\textbf{AMS Classification:} 05C50; 05C35

\section{Introduction}
In this paper, we consider simple, undirected and connected graphs. Let $G$ be a graph with vertex set $V(G)=\{v_{1}, v_{2}, \ldots, v_{n}\}$ and edge set $E(G)$.
The order and size of $G$ are denoted by $|V(G)|=n$ and $|E(G)|=e(G)$, respectively.
For any vertex $u \in V(G)$, we denote by $d_{G}(u)$ the degree of vertex $u$ in $G$ and
by $(d_{1}, d_{2}, \ldots, d_{n})$ the degree sequence of $G$ with $d_{1}\leq d_{2}\leq \cdots \leq d_{n}$.
Let $G_{1}$ and $G_{2}$ be two vertex-disjoint graphs. We denote by $G_{1}+G_{2}$ the disjoint union of $G_{1}$ and $G_{2}.$
The join $G_{1}\vee G_{2}$ is the graph obtained from $G_{1}+G_{2}$ by adding all possible edges between $V(G_1)$ and $V(G_2)$.
We denote by $\delta$, $\overline{G}$, $\omega(G)$ the minimum degree,
the complement and the clique number of $G,$ respectively. For undefined terms and notions one can refer to
\cite{Bondy2008} and \cite{Brouwer2011}.

Let $A(G)$ be the adjacency matrix and $D(G)$ be the diagonal degree matrix of $G$. Let $Q(G)=D(G)+A(G)$ be the signless Laplacian matrix of $G$.
The largest eigenvalues of $A(G)$ and $Q(G)$, denoted by $\rho(G)$ and $q(G)$, are called the spectral radius and the signless Laplacian spectral radius of $G$, respectively.

The concept of closure of a graph was used implicitly by Ore \cite{Ore1960},
and formally introduced by Bondy and Chvatal \cite{Bondy1976}.
Fix an integer $l\geq 0$, the $l$-closure of a graph $G$ is the graph obtained from $G$
by successively joining pairs of nonadjacent vertices whose degree sum is at least $l$
until no such pair exists. Denote by $C_{l}(G)$ the $l$-closure of $G.$ Then we have
$$d_{C_{l}(G)}(u)+d_{C_{l}(G)}(v)\leq l-1$$ for every pair of nonadjacent vertices $u$ and $v$ of $C_{l}(G).$

For integer $k\geq2,$ a graph $G$ is called $k$-leaf-connected if $|V(G)|\geq k+1$ and given any subset $S\subseteq V(G)$ with $|S|=k,$
$G$ always has a spanning tree $T$ such that $S$ is precisely the set of leaves of $T.$
Thus a graph is $2$-leaf-connected if and only if
it is Hamilton-connected. Hence $k$-leaf-connectedness of a graph is a natural generalization of Hamilton-connectedness.
Gurgel and Wakabayashi \cite{Gurgel1986} proved that if $G$ is a $k$-leaf-connected graph of order $n$, where $2\leq k \leq n-2$,
then $G$ is $(k+1)$-connected. Hence $\delta\geq k+1$ is a trivial necessary condition for a graph to be $k$-leaf-connected.

Determining whether a given graph is $k$-leaf-connected is NP-complete. Gurgel and Wakabayashi \cite{Gurgel1986} initially proved the following sufficient condition in terms of $e(G)$ to guarantee a graph $G$ to be $k$-leaf-connected.

\begin{thm}[Gurgel and Wakabayashi \cite{Gurgel1986}]\label{th1}
Let $G$ be a connected graph of order $n$ with minimum degree $\delta \geq k+1,$ where $2\leq k\leq n-4.$
If $$e(G)\geq {n-1\choose2}+k+1,$$ then $G$ is $k$-leaf-connected.
\end{thm}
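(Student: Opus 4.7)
The plan is to reduce to a pure degree-sum condition via the closure technique. The value $n+k-1$ hints at the Ore-type closure theorem for $k$-leaf-connectedness that underlies Gurgel and Wakabayashi's result: a graph $H$ on $n\geq k+1$ vertices is $k$-leaf-connected whenever every pair of nonadjacent vertices $u,v\in V(H)$ satisfies $d_H(u)+d_H(v)\geq n+k-1$, so equivalently $H$ is $k$-leaf-connected if and only if the $(n+k-1)$-closure $C_{n+k-1}(H)$ is. I would first set $G'=C_{n+k-1}(G)$ and argue by contradiction: if $G$ is not $k$-leaf-connected, then neither is $G'$, so in particular $G'\neq K_n$ and $G'$ admits at least one nonadjacent pair.

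Next, since closure only adds edges, $\delta(G')\geq\delta(G)\geq k+1$. Choosing any nonadjacent pair $u,v$ in $G'$, the defining property of the closure gives
$$d_{G'}(u)+d_{G'}(v)\leq n+k-2,$$
and combining with $d_{G'}(v)\geq k+1$ forces $d_{G'}(u)\leq n-3$. The same bound $d_{G'}(w)\leq n+k-2-d_{G'}(u)$ holds for every non-neighbour $w$ of $u$, so I would estimate
$$2e(G')=\sum_{w\in V(G')}d_{G'}(w)\leq d_{G'}(u)\cdot n+(n-1-d_{G'}(u))(n+k-2-d_{G'}(u)),$$
bounding each of the $d_{G'}(u)$ neighbours of $u$ (together with $u$ itself) by $n-1$ and each of the $n-1-d_{G'}(u)$ non-neighbours of $u$ by $n+k-2-d_{G'}(u)$.

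Writing $a=d_{G'}(u)$ and letting $g(a)$ denote the right-hand side, a direct computation yields $g''(a)=2>0$, so $g$ is strictly convex on the interval $[k+1,n-3]$, with the lower endpoint coming from $\delta\geq k+1$ and the upper one from the closure bound above. The maximum of $g$ on this interval therefore occurs at an endpoint, and the assumption $k\leq n-4$ is precisely what I would need to check $g(n-3)\geq g(k+1)$; evaluating at $a=n-3$ should give
$$2e(G')\leq n^2-3n+2k+2,\qquad\text{i.e.,}\qquad e(G')\leq\binom{n-1}{2}+k,$$
contradicting $e(G')\geq e(G)\geq\binom{n-1}{2}+k+1$. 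The single delicate point I anticipate is the clean invocation of the Ore-type closure theorem for $k$-leaf-connectedness; once that is in hand, what remains is a routine one-variable convex-optimisation argument on an explicit interval, and the hypothesis $k\leq n-4$ enters only at the endpoint comparison.
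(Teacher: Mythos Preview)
The paper does not supply its own proof of Theorem~\ref{th1}; it is quoted as a result of Gurgel and Wakabayashi and then superseded by Theorems~\ref{th2} and~\ref{main}. So there is no in-paper proof to match against, only the surrounding machinery (Lemmas~\ref{le16} and~\ref{le111}) that the paper imports from the same source.

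Your argument is correct. Passing to $G'=C_{n+k-1}(G)$ via Lemma~\ref{le111}, picking a nonadjacent pair (which exists since $G'\neq K_n$), and bounding every non-neighbour of a fixed vertex $u$ by $n+k-2-d_{G'}(u)$ gives exactly
\[
2e(G')\le a\,n+(n-1-a)(n+k-2-a)=a^{2}-(n+k-3)a+(n-1)(n+k-2),\qquad a=d_{G'}(u),
\]
a convex quadratic on $[k+1,n-3]$. The endpoint comparison $g(n-3)-g(k+1)=n-k-4\ge 0$ is precisely where $k\le n-4$ is used, and $g(n-3)=n^{2}-3n+2k+2$ yields $e(G')\le\binom{n-1}{2}+k$, the desired contradiction. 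One cosmetic point: your phrase ``together with $u$ itself bounded by $n-1$'' is slightly off---you are actually using $d_{G'}(u)+d_{G'}(u)(n-1)=a\,n$, i.e., $u$ contributes its exact degree $a$, not the bound $n-1$---but the displayed inequality is right.

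By way of comparison, the paper's own edge-counting for its stronger Theorem~\ref{main} does not fix a vertex and look at its non-neighbours; instead it invokes the Chv\'atal-type degree-sequence criterion (Lemma~\ref{le16}) to extract an index $i$ with $d_{i-k+1}\le i$ and $d_{n-i}\le n-i+k-2$, and then bounds $\sum d_j$ in three blocks indexed by $i$. Both routes end in a one-variable concave/convex optimisation on an explicit interval; your closure-only argument is shorter and avoids Lemma~\ref{le16} entirely, while the degree-sequence route gives finer structural information (the parameter $i$) that the paper needs to pin down the exceptional graphs in Theorem~\ref{main}.
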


Ao, Liu, Yuan and Li \cite{Ao2022} presented the following sufficient condition for a graph to be $k$-leaf-connected
and improved the result of Theorem \ref{th1}.

\begin{thm}[Ao, Liu, Yuan and Li \cite{Ao2022}]\label{th2}
Let $G$ be a connected graph of order $n$ and minimum degree $\delta\geq k+1$,
where $2\leq k\leq n-4$. If
$$e(G)\geq {n-2\choose 2}+2k+2,$$
then $G$ is $k$-leaf-connected unless $G\in \{K_{3}\vee(K_{n-5}+2K_{1}), K_{4}\vee (K_{2}+3K_{1}), K_{6}\vee 6K_{1}, K_{5}\vee 5K_{1},
K_{4}\vee (K_{1,4}+K_{1}), K_{3}\vee K_{2,5}, K_{4}\vee 4K_{1},
K_{3}\vee (K_{1,3}+K_{1}), K_{2}\vee K_{2,4}\}.$
\end{thm}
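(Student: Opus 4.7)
The plan is to combine the closure technique for $k$-leaf-connectedness with a careful degree-sum extremal argument, in the spirit of the Bondy--Chv\'atal closure. First, invoke the closure lemma of Gurgel and Wakabayashi, which says that $G$ is $k$-leaf-connected if and only if its $(n+k-1)$-closure $C_{n+k-1}(G)$ is. Since forming the closure only adds edges, both $\delta\geq k+1$ and $e(G)\geq\binom{n-2}{2}+2k+2$ are preserved, so I may assume from the outset that $G$ itself is $(n+k-1)$-closed: $d_G(u)+d_G(v)\leq n+k-2$ whenever $uv\notin E(G)$. Suppose for contradiction that $G$ is not $k$-leaf-connected. Then $G\neq K_n$, so some vertex has a non-neighbor; pick such a vertex $u$ of minimum degree and set $s:=d_G(u)$. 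The hypothesis $\delta\geq k+1$ gives $s\geq k+1$, while the closure condition together with the minimality of $s$ forces $s\leq (n+k-2)/2$.

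Next, bound $2e(G)=\sum_v d_G(v)$ by splitting $V(G)=\{u\}\cup N(u)\cup W$ with $|W|=n-1-s$, using $d_G\leq n-1$ on $N(u)$ and $d_G\leq n+k-2-s$ on $W$:
\[
2e(G)\;\leq\; s + s(n-1) + (n-1-s)(n+k-2-s) \;=\; s^{2}-(n+k-3)s+(n-1)(n+k-2).
\]
Combining this with $2e(G)\geq (n-2)(n-3)+4k+4$ yields, after a short simplification, the quadratic constraint
\[
s^{2}-(n+k-3)\,s+\bigl(kn+2n-5k-8\bigr)\;\geq\;0. \qquad (\ast)
\]
A direct calculation shows that the discriminant of $(\ast)$ is $(n-k-7)^{2}-8$; hence for all sufficiently large $n-k$ the two real roots lie near $k+2$ and $n-5$, and the admissible window $s\in[k+1,(n+k-2)/2]$ collapses to $s\in\{k+1,k+2\}$. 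For the finitely many $n-k$ where the discriminant is non-positive, $(\ast)$ is vacuous and one has to proceed by a direct inspection instead.

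For each admissible $s$, inequality $(\ast)$ is nearly tight, and consequently every inequality in the degree-sum chain above is nearly an equality. This rigidifies $G$: the $s$ vertices of $N(u)$ must be universal, so $G=K_s\vee G'$ for some graph $G'$ on $n-s$ vertices containing $u$, and the degrees inside $G'$ are confined to a narrow window controlled by $n-k-2-s$. Enumerating the possible $G'$ (isolated vertices attached to a clique, copies of $K_{2,t}$, stars plus an isolated vertex, and a few more near-bipartite configurations) yields precisely the nine exceptional graphs in the statement. The argument finishes by verifying, for each candidate, that it is genuinely not $k$-leaf-connected: one selects a $k$-subset $S$ such that any spanning tree with leaf set $S$ would be forced to induce a Hamilton-path-like structure on the $K_s$ part, and then observes that the sparse piece of $G$ cannot be grafted onto such a path without creating an extra leaf. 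I expect the main obstacle of the proof to lie precisely in this structural bookkeeping: the quadratic $(\ast)$ pins down the large-$n-k$ regime cleanly, but the small-$n-k$ regime (where $(\ast)$ gives no information) and the individual certifications of the nine exceptional graphs require a lengthy, essentially routine case analysis.
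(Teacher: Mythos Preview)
The paper does not prove Theorem~\ref{th2}; it is quoted from \cite{Ao2022} as a benchmark that Theorem~\ref{main} improves, so there is no ``paper's own proof'' to compare against directly. What the paper does contain is a proof of the sharper Theorem~\ref{main} via Lemma~\ref{le11}, and that argument rests on a different tool than the one you use: the Gurgel--Wakabayashi degree-sequence obstruction (Lemma~\ref{le16}), which for a non-$k$-leaf-connected graph produces an index $i$ with $k\le i\le (n+k-2)/2$ such that $d_{i-k+1}\le i$ and $d_{n-i}\le n-i+k-2$. This yields information about $i-k+1$ low-degree vertices and $n-i$ medium-degree vertices simultaneously, and the resulting upper bound on $e(G)$ is strictly sharper than what a single-vertex closure argument can give.

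Your outline is correct up through the quadratic $(\ast)$, and the discriminant $(n-k-7)^{2}-8$ is right. The gap is in the rigidification step. You assert that once $s\in\{k+1,k+2\}$, ``every inequality in the degree-sum chain above is nearly an equality'' and hence the vertices of $N(u)$ must be universal. But evaluate the left side of $(\ast)$ at $s=k+1$: the value is $n-k-4$, not $O(1)$. That is precisely the slack between your upper bound on $2e(G)$ and the hypothesis, so when $s=k+1$ the degree sum may fall short of your upper bound by as much as $n-k-4$. Nothing then forces the vertices of $N(u)$ to be universal, nor the vertices of $W$ to have degree exactly $n-3$; a linear-in-$n$ amount of freedom remains. (At $s=k+2$ the slack is only $2$, and there your tightening does work.) The one infinite family in the exception list, $K_{3}\vee(K_{n-5}+2K_{1})$, sits exactly in the $s=k+1$ regime, and to isolate it you need a second layer of degree-counting---in effect Lemma~\ref{le16} with $i=k+1$, which pins down \emph{two} vertices of degree $\le k+1$ rather than one---before any enumeration of $G'$ becomes finite. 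Until that step is supplied, the claimed list of nine graphs does not follow from your argument.
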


As a special case of $k$-leaf-connectedness, there are many sufficient conditions to assure a graph to be $2$-leaf-connected
(see for example \cite{Wei2019, Yu2013, Zhou2017, Zhou}). By introducing the minimum degree $\delta$ as a new parameter,
Chen and Zhang \cite{Chen2018} presented a sufficient condition for a graph with $\delta\geq t\geq 2$ to be Hamilton-connected:
$e(G)\geq {n-t+1\choose 2}-\frac{t^{2}-3t-2}{2}$.
Zhou and Wang \cite{Zhou2020} proved a better condition for a graph to be Hamilton-connected: $e(G)\geq {n-t\choose 2}+t^{2}+t$.
Recently, Xu, Zhai and Wang \cite{Xu2021} improved the results of \cite{Chen2018} and \cite{Zhou2020}. Define
$L^{t}_{n}=K_{2}\vee (K_{n-t-1}+K_{t-1})~~(2\leq t\leq \frac{n}{2}), N^{t}_{n}=K_{t}\vee (K_{n-2t+1}+(t-1)K_{1})~~(2\leq t\leq \frac{n}{2})$,
and $M^{t}_{n}=K_{t+1}\vee (K_{n-2t-1}+tK_{1})~~(2\leq t\leq \frac{n-1}{2}).$

\begin{thm}[Xu, Zhai and Wang \cite{Xu2021}]\label{th3}
Let $G$ be a connected graph of order $n\geq 6t+3$ with $\delta\geq t\geq 2$. If
$$e(G)\geq {n-t\choose 2}+t^{2}+2,$$
then $G$ is Hamilton-connected unless $C_{n+1}(G)\in \{L^{t}_{n}, N^{t}_{n}, M^{t}_{n}\}.$
\end{thm}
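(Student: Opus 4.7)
The plan is to reduce the problem to the $(n+1)$-closure $H := C_{n+1}(G)$. By the Bondy--Chvatal closure theorem for Hamilton-connectedness, $G$ is Hamilton-connected if and only if $H$ is, and by definition of $H$ every pair of nonadjacent vertices $u,v\in V(H)$ satisfies $d_H(u)+d_H(v)\leq n$. Since closure only adds edges, we also have $e(H)\geq e(G)\geq\binom{n-t}{2}+t^2+2$ and $\delta(H)\geq\delta(G)\geq t$. If $H=K_n$ then $H$ is trivially Hamilton-connected, so I suppose instead that $H\neq K_n$ is not Hamilton-connected and aim to show $H\in\{L^t_n,N^t_n,M^t_n\}$.

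Let $u$ be a vertex of minimum degree in $H$, write $s=\delta(H)\geq t$, and set $A=V(H)\setminus(\{u\}\cup N_H(u))$, so $|A|=n-s-1$. The closure condition gives $d_H(w)\leq n-s$ for every $w\in A$, and summing degrees yields
\[
2e(H)\;\leq\;(s+1)(n-1)+(n-s-1)(n-s)\;=\;n^2-sn+s^2-1.
\]
Comparing this with $2e(H)\geq(n-t)(n-t-1)+2t^2+4$ under the hypothesis $n\geq 6t+3$ produces a quadratic inequality in $s$ that confines $s$ to a narrow window of small values, and in particular rules out $s$ substantially bigger than $t$.

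Next, I would extract a large clique from $H$. Any two vertices of degree exceeding $n/2$ must be adjacent, for otherwise the closure inequality would be violated; hence the set of high-degree vertices spans a clique in $H$. Combined with the edge lower bound and the previous bound on $s$, this forces $\omega(H)\geq n-t$. Fix a maximum clique $K\subseteq V(H)$ and set $R=V(H)\setminus K$, so $|R|=n-\omega(H)\leq t$. The structure of $H$ is now determined by $H[R]$ together with the bipartite pattern of edges between $R$ and $K$, both of which are heavily constrained by $\delta(H)\geq t$ and by the closure property.

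The main obstacle is the final case analysis, which I would organize according to the clique number of $H[R]$. In each subcase, either one can exhibit a Hamilton path between an arbitrarily prescribed pair of vertices of $H$, showing that $H$ is already Hamilton-connected (contradiction), or the edge count forces $e(H)$ to meet $\binom{n-t}{2}+t^2+2$ almost exactly, and a direct identification matches $H$ with one of $L^t_n$, $N^t_n$, or $M^t_n$. Verifying that no other configuration achieves the edge bound while failing to be Hamilton-connected---and in particular that exactly these three families arise---is the delicate bookkeeping step I expect to be the hardest part of the argument, since it requires both explicit constructions of Hamilton paths in all non-exceptional cases and tight edge counts on each exceptional candidate.
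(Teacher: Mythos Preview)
First, a contextual note: the paper does not actually prove Theorem~\ref{th3}; it is quoted from \cite{Xu2021} as motivation, and the paper's own contribution is the $k$-leaf-connected generalization (Theorem~\ref{main}). So the relevant comparison is with the method of Lemma~\ref{le11} and the proof of Theorem~\ref{main}, which specialize to $k=2$ to give (essentially) the Xu--Zhai--Wang argument.

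There is a genuine gap in your outline, and it occurs at the very first counting step. From a single minimum-degree vertex $u$ with $s=\delta(H)$ you derive
\[
2e(H)\;\le\;(s+1)(n-1)+(n-s-1)(n-s)\;=\;n^{2}-sn+s^{2}-1,
\]
and claim that comparing this with the lower bound $2e(H)\ge (n-t)(n-t-1)+2t^{2}+4$ ``confines $s$ to a narrow window of small values''. It does not. The resulting inequality in $s$ is
\[
s^{2}-ns+(2t+1)n-3t^{2}-t-5\;\ge\;0,
\]
whose discriminant is $n^{2}-(8t+4)n+12t^{2}+4t+20$. At the threshold $n=6t+3$ this discriminant equals $17-8t$, which is negative for every $t\ge 3$; hence the quadratic is positive for \emph{all} $s$, and no value of $s$ is excluded. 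Even for $t=2$ the excluded window is a single integer near $n/2$. In short, knowing only that one vertex has degree $s$ and that its non-neighbours have degree at most $n-s$ is far too weak to force $s$ close to $t$, so your pathway to the clique bound $\omega(H)\ge n-t$ collapses at this point.

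What the paper (and, one expects, \cite{Xu2021}) uses instead is the Chv\'atal-type degree-sequence obstruction: since $H$ is $(n+1)$-closed and not Hamilton-connected, Lemma~\ref{le16} with $k=2$ produces an index $i$ with $2\le i\le n/2$ such that $d_{i-1}\le i$ and $d_{n-i}\le n-i$. This pins down \emph{many} low-degree vertices simultaneously (the first $i-1$ and the next $n-2i+1$), yielding a much sharper edge bound that \emph{does} force $i\in\{t,t+1\}$ under $n\ge 6t+3$. From there one shows that the high-degree vertices form a large clique and carries out the structural case analysis you sketch. Your plan for the endgame (clique $K$, leftover set $R$, structure of $H[R]$ and the $R$--$K$ edges) is along the right lines, but it cannot be reached without replacing the single-vertex degree bound by the degree-sequence argument.
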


Inspired by the ideas from the conjecture by Erd\H{o}s and Hajnal \cite{Erd1989} and the result on Hamilton-connected graphs by Xu, Zhai and Wang \cite{Xu2021}, we first show that an $(n+k-1)$-closed non-$k$-leaf-connected graph $G$ must contain a large clique
if its number of edges is large enough. Using the key approach and typical spectral techniques,
we present a best possible condition based upon the size to guarantee a graph to be $k$-leaf-connected as follows.
Our main result not only improves the result of Theorem \ref{th2},
but also extends the result on Hamilton-connected graphs in Theorem \ref{th3}.

\begin{thm}\label{main}
Let $G$ be a connected graph of order $n\geq k+17$ and minimum degree $\delta\geq k+1$, where $k\geq 2$.
If
$$e(G)\geq {n-3\choose 2}+3k+5,$$
then $G$ is $k$-leaf-connected unless $C_{n+k-1}(G)\in\{ K_{k}\vee (K_{n-k-2}+K_{2}), K_{3}\vee (K_{n-5}+2K_{1}),
K_{4}\vee (K_{n-7}+3K_{1})\}$.
\end{thm}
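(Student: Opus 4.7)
The plan is to exploit the Bondy--Chv\'atal-type closure result for $k$-leaf-connectedness, which is essentially due to Gurgel and Wakabayashi: $G$ is $k$-leaf-connected if and only if its $(n+k-1)$-closure $C_{n+k-1}(G)$ is $k$-leaf-connected. Writing $G^{\ast}:=C_{n+k-1}(G)$ and assuming for contradiction that $G$ is not $k$-leaf-connected, the same is true of $G^{\ast}$. Moreover, $\delta(G^{\ast})\geq\delta(G)\geq k+1$, $e(G^{\ast})\geq e(G)\geq\binom{n-3}{2}+3k+5$, and by definition of the closure $d_{G^{\ast}}(u)+d_{G^{\ast}}(v)\leq n+k-2$ for every pair $u,v$ of nonadjacent vertices of $G^{\ast}$. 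The goal will then be to pin $G^{\ast}$ down to one of the three listed exceptional graphs.

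The heart of the argument is the structural claim advertised in the introduction: any $(n+k-1)$-closed, non-$k$-leaf-connected graph with $n\geq k+17$, $\delta\geq k+1$, and $e(G^{\ast})\geq\binom{n-3}{2}+3k+5$ must satisfy $\omega(G^{\ast})\geq n-3$. To prove this I would argue by contradiction: assume $\omega(G^{\ast})\leq n-4$, fix a maximum clique $K$, and estimate separately the edges inside $K$, the edges inside $V(G^{\ast})\setminus V(K)$, and the edges between them. The closure inequality forces vertices outside $K$ to have moderate degree, and a careful double count yields $e(G^{\ast})<\binom{n-3}{2}+3k+5$, the desired contradiction. The assumption $n\geq k+17$ is used precisely to make the resulting binomial estimate go through, in the spirit of the Erd\H{o}s--Hajnal style argument used by Xu, Zhai and Wang for the Hamilton-connected case.

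Once $\omega(G^{\ast})\geq n-3$ is secured, I would treat the cases $\omega(G^{\ast})\in\{n-3,n-2,n-1,n\}$ separately. The cases $\omega(G^{\ast})=n$ and $\omega(G^{\ast})=n-1$ are quickly disposed of, since $K_{n}$ is $k$-leaf-connected and the closure inequality together with $\delta\geq k+1$ make $\omega(G^{\ast})=n-1$ impossible for $n\geq k+17$. In the remaining cases, fix a maximum clique $K$ and denote the two or three vertices outside $K$ by $x_{1},x_{2}$ or $x_{1},x_{2},x_{3}$. The induced subgraph on these vertices together with their neighbourhoods $N_{G^{\ast}}(x_{i})\cap V(K)$ is heavily constrained: whenever $x_{i}x_{j}\notin E(G^{\ast})$ the inequality $d_{G^{\ast}}(x_{i})+d_{G^{\ast}}(x_{j})\leq n+k-2$ combined with $d_{G^{\ast}}(x_{i})\geq k+1$ restricts the common and symmetric-difference neighbourhoods sharply. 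Splitting on the number of edges in $G^{\ast}[\{x_{1},x_{2},x_{3}\}]$ and comparing against the edge bound $\binom{n-3}{2}+3k+5$, I expect every configuration other than $K_{k}\vee(K_{n-k-2}+K_{2})$, $K_{3}\vee(K_{n-5}+2K_{1})$, and $K_{4}\vee(K_{n-7}+3K_{1})$ either to violate the edge cap or to admit, for any prescribed $k$-set $S$, an explicit spanning tree whose leaf set is exactly $S$.

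The main obstacle I expect is the structural lemma forcing $\omega(G^{\ast})\geq n-3$; the extremal graph $K_{k}\vee(K_{n-k-2}+K_{2})$ has $\omega=n-2$ and edge count $\binom{n-3}{2}+n+2k-2$, which sits only just above the threshold $\binom{n-3}{2}+3k+5$, so any loose estimate on the edges between a maximum clique and its complement will fail. A secondary technical difficulty is the finishing case analysis, which, although routine for each individual configuration, requires careful checking of several borderline subcases (in particular when $\omega(G^{\ast})=n-2$ with specific neighbourhood patterns for $x_{1},x_{2}$) to confirm that no additional exceptional graph slips through.
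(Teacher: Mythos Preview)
Your high-level plan---pass to the closure $G^{\ast}=C_{n+k-1}(G)$, establish that $G^{\ast}$ contains a large clique, and then finish by analysing the two or three vertices outside that clique---coincides with the paper's. The genuine gap is in how you propose to prove the structural lemma.

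Your sketch for ``$\omega(G^{\ast})\geq n-3$'' uses only the closure inequality: for a maximum clique $K$ of size $\omega$, every $v\notin K$ misses some $u\in K$, so $d(v)\leq n+k-1-\omega$, whence
\[
e(G^{\ast})\ \leq\ \binom{\omega}{2}+(n-\omega)(n+k-1-\omega)\ =:\ g(\omega).
\]
But $g$ is convex in $\omega$, and while $g(n-4)<\binom{n-3}{2}+3k+5$ once $n\geq k+12$, the inequality fails for smaller $\omega$. The only a priori lower bound on $\omega$ obtainable from closure plus the edge hypothesis (vertices of degree at least $(n+k-1)/2$ form a clique) gives roughly $\omega\geq n-11$, and for instance $g(n-9)>\binom{n-3}{2}+3k+5$ when $n=k+17$. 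In short, your argument never invokes the hypothesis that $G^{\ast}$ is \emph{not} $k$-leaf-connected, and without that extra input the counting does not close at the stated threshold $n\geq k+17$.

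The paper supplies the missing information via the Chv\'atal-type degree-sequence criterion of Gurgel and Wakabayashi: because $G^{\ast}$ fails to be $k$-leaf-connected, there is an index $i$ with $k\leq i\leq(n+k-2)/2$ such that $d_{i-k+1}\leq i$ and $d_{n-i}\leq n-i+k-2$. Summing the degree sequence against these caps and comparing with $e(G^{\ast})\geq\binom{n-3}{2}+3k+5$ forces $i\in\{k+1,k+2\}$ (here is where $n\geq k+17$ is used). The case $i=k+1$ then yields $\omega(G^{\ast})=n-2$ directly, leading after the finishing analysis to $K_{k}\vee(K_{n-k-2}+K_{2})$ or $K_{3}\vee(K_{n-5}+2K_{1})$; the case $i=k+2$ forces $G^{\ast}\cong K_{k+2}\vee(K_{n-k-5}+3K_{1})$, which one checks by exhibiting explicit spanning trees to be $k$-leaf-connected for every $k\geq 3$, leaving only the exception $K_{4}\vee(K_{n-7}+3K_{1})$. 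This degree-sequence step is the idea your proposal is missing.
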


\section{Preliminaries}
We will present in this section some important results that will be used in our subsequent arguments.
Gurgel and Wakabayashi \cite{Gurgel1986} proved a sufficient condition in terms of the degree sequence
for a graph to be $k$-leaf-connected.

\begin{lem}[Gurgel and Wakabayashi \cite{Gurgel1986}]\label{le16}
Let $k$ and $n$ be such that $2\leq k\leq n-3.$ Let $G$ be a graph with degree sequence $d_{1}\leq d_{2}\leq \cdots \leq d_{n}.$
Suppose that there is no integer $i$ with $k\leq i\leq\frac{n+k-2}{2}$ such that
$d_{i-k+1}\leq i$ and $d_{n-i}\leq n-i+k-2.$ Then $G$ is $k$-leaf-connected.
\end{lem}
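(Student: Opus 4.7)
The plan is to argue by contradiction via the $(n+k-1)$-closure. First I would apply the standard Bondy--Chv\'atal-type closure reduction for $k$-leaf-connectedness (derivable, with a little work, from Lemma \ref{le16}): $G$ is $k$-leaf-connected if and only if $H := C_{n+k-1}(G)$ is. Assuming $G$ is not $k$-leaf-connected, one may replace $G$ by $H$: the graph $H$ is $(n+k-1)$-closed (so $d_H(u) + d_H(v) \le n+k-2$ for every non-adjacent pair $\{u,v\}$), satisfies $\delta(H) \ge k+1$ and $e(H) \ge \binom{n-3}{2}+3k+5$, and is still not $k$-leaf-connected. The task reduces to showing that $H$ must be one of the three listed graphs.

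Next I would apply Lemma \ref{le16} to $H$ to extract an integer $i$ with $k \le i \le (n+k-2)/2$, $d_{i-k+1}(H) \le i$ and $d_{n-i}(H) \le n-i+k-2$. The case $i=k$ is excluded by $\delta(H) \ge k+1$. Splitting the sorted degree sequence into the bottom $i-k+1$, middle $n-2i+k-1$ and top $i$ blocks produces the crude upper bound
$$
2e(H) \le (i-k+1)i + (n-2i+k-1)(n-i+k-2) + i(n-1),
$$
and combining with the edge lower bound reduces to the quadratic inequality
$$
3i^2 - (2n+4k-5)i + (2k+4)n + k^2 - 9k - 20 \ge 0.
$$
Direct evaluation gives $2(n-k-6)$ at $i=k+1$ and $2$ at $i=k+2$, while the expression is strictly negative for every integer $i \in [k+3,(n+k-2)/2]$ as soon as $n \ge k+17$. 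Hence $i \in \{k+1, k+2\}$.

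The key structural step, and the ``large clique'' claim highlighted in the abstract, is that the closure condition together with $\delta(H) \ge k+1$ forces every vertex $v$ with $d_H(v) \ge n-2$ to satisfy $d_H(v)=n-1$, because a potential non-neighbour would have degree at most $k$. Setting $W := \{v : d_H(v)=n-1\}$, the set $W$ induces a clique $K_{|W|}$ joined to $V(H)\setminus W$, so $H = K_{|W|} \vee G'$ where $G' := H - W$, and the condition $d_{n-i}(H) \le n-i+k-2$ forces $|W| \le i \le k+2$. Writing $m := n - |W|$, the closure of $H$ descends to the bound $d_{G'}(x) + d_{G'}(y) \le m + k - |W| - 2$ for every non-adjacent pair in $G'$, the edge inequality on $H$ translates into one of shape $e(G') \ge \binom{m-3}{2} + c_{|W|}$ for a small constant $c_{|W|}$, and $\delta(G') \ge 1$.

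The final step is a case analysis on $(|W|, i)$ identifying $G'$. Only the three pairs $(k, k+1)$, $(k+1, k+1)$, and $(k+2, k+2)$ are compatible with the closure, edge and degree constraints and simultaneously yield a non-$k$-leaf-connected $H$; in these cases $G'$ is pinned down to $K_{m-2}+K_2$, $K_{m-2}+2K_1$, or $K_{m-3}+3K_1$, producing the three listed closures $K_k \vee (K_{n-k-2}+K_2)$, $K_3 \vee (K_{n-5}+2K_1)$, and $K_4 \vee (K_{n-7}+3K_1)$. I expect this combinatorial classification of $G'$ to be the main obstacle: one must eliminate all other candidates for $G'$ that satisfy the raw edge and closure bounds by exhibiting explicit spanning-tree constructions showing that the corresponding $H$ is in fact $k$-leaf-connected, so it cannot have arisen as our $H$.
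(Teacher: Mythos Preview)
Your proposal does not prove the stated lemma. Lemma~\ref{le16} is the Chv\'atal-type degree-sequence criterion of Gurgel and Wakabayashi: it asserts that if no index $i$ with $k\le i\le (n+k-2)/2$ satisfies both $d_{i-k+1}\le i$ and $d_{n-i}\le n-i+k-2$, then $G$ is $k$-leaf-connected. In the paper this lemma is not proved at all; it is quoted from~\cite{Gurgel1986} as a preliminary tool. A proof of it would have to show, for every $k$-subset $S\subseteq V(G)$, how the degree hypothesis forces the existence of a spanning tree whose leaf set is exactly $S$ --- an argument of an entirely different nature (in~\cite{Gurgel1986} it is obtained via a Chv\'atal-type analysis).

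What you have actually outlined is a proof of Theorem~\ref{main}, using Lemma~\ref{le16} as a black box (you even cite it explicitly inside the argument). Your sketch of \emph{that} proof is broadly aligned with the paper's route through Lemma~\ref{le11}: pass to the closure $H=C_{n+k-1}(G)$, extract the index $i$ from Lemma~\ref{le16}, bound $2e(H)$ by the three-block degree sum to force $i\in\{k+1,k+2\}$, and then classify $H$. Your structural step via $W=\{v:d_H(v)=n-1\}$ and the join decomposition $H=K_{|W|}\vee G'$ is a legitimate variant of the paper's direct clique-number argument, though the final case analysis you describe (``only the three pairs $(|W|,i)$ survive'') is left at the level of a plan rather than carried out. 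In any event, none of this touches the actual content of Lemma~\ref{le16}.
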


\begin{lem}[Gurgel and Wakabayashi \cite{Gurgel1986}]\label{le111}
Let $G$ be a graph and $k$ be an integer with $2\leq k \leq n-1.$
Then $G$ is $k$-leaf-connected
if and only if the $(n+k-1)$-closure $C_{n+k-1}(G)$ of $G$ is $k$-leaf-connected.
\end{lem}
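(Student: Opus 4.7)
The plan is to follow the Bondy--Chv\'atal closure template. The forward direction is immediate: any spanning tree of $G$ with leaf set exactly $S$ remains a spanning tree (with the same leaf set) of any supergraph of $G$ on the same vertex set, so $G$ being $k$-leaf-connected forces $C_{n+k-1}(G)$ to be $k$-leaf-connected. For the backward direction, $C_{n+k-1}(G)$ is obtained from $G$ by iteratively adjoining an edge $uv$ whose current endpoints satisfy $d(u)+d(v)\geq n+k-1$, so it suffices to prove the one-edge statement that if $H$ is a graph of order $n$ with nonadjacent vertices $u,v$ satisfying $d_H(u)+d_H(v)\geq n+k-1$ and $H+uv$ is $k$-leaf-connected, then $H$ is $k$-leaf-connected, and then to iterate this in reverse along the sequence of edges added.

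To establish the one-edge claim I would argue by contradiction. Assume $H$ is not $k$-leaf-connected and fix $S\subseteq V(H)$ with $|S|=k$ such that no spanning tree of $H$ has leaf set exactly $S$. Since $H+uv$ is $k$-leaf-connected, choose a spanning tree $T$ of $H+uv$ with leaf set $L(T)=S$; necessarily $uv\in E(T)$. Deleting $uv$ splits $T$ into $T_u\ni u$ and $T_v\ni v$ with vertex sets $A$ and $B$ satisfying $|A|+|B|=n$. The goal is to produce $x\in A$, $y\in B$ with $xy\in E(H)$ such that $T':=T-uv+xy$ is a spanning tree of $H$ with $L(T')=S$; the existence of such a $T'$ would contradict the choice of $S$. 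The engine of the argument is the crossing-edge inequality
\[ |N_H(u)\cap B|+|N_H(v)\cap A| \;\geq\; d_H(u)+d_H(v)-(n-2) \;\geq\; k+1, \]
which follows from $u\in A$, $v\in B$, and $uv\notin E(H)$. Among the $\geq k+1$ candidate swap edges incident to $u$ or $v$ and crossing $(A,B)$, any whose endpoint distinct from $u,v$ lies in $S$ is forbidden (that endpoint would jump from $T$-degree $1$ to $T'$-degree $2$, converting a leaf into an internal vertex); this forbids at most $|S|=k$ choices, so pigeonhole delivers a legal swap in the generic case where $u,v\notin S$ and $\deg_T(u),\deg_T(v)\geq 3$.

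The main obstacle is the boundary sub-cases in which $u\in S$ (hence $T_u=\{u\}$) or $\deg_T(v)=2$ (so removing $uv$ would make $v$ an unintended leaf of $T'$), together with their symmetric analogues. A preliminary observation rules out the worst configuration: $u$ and $v$ cannot both belong to $S$, for otherwise $T$ would consist of the single edge $uv$, contradicting $n\geq k+1\geq 3$. In each remaining degenerate sub-case the naive single-edge swap is replaced by a two-step rotation exchanging two tree edges for two $H$-edges, designed to restore the degree of the offending vertex before or during the swap. The crossing-edge inequality above, combined with $d_H(u)\leq n-2$ (which forces $d_H(v)\geq k+1$, and symmetrically), supplies enough flexibility to keep every affected vertex on the correct side of the leaf/non-leaf threshold; a finite case check then produces a spanning tree of $H$ with leaf set exactly $S$, yielding the desired contradiction. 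The lemma follows by descending induction along the edges added in forming $C_{n+k-1}(G)$.
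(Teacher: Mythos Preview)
The paper does not prove this lemma; it is quoted verbatim from Gurgel and Wakabayashi \cite{Gurgel1986} as a known result, with no argument supplied. There is therefore no ``paper's own proof'' to compare against.

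On its own merits, your outline is the standard Bondy--Chv\'atal closure strategy and is essentially how such closure lemmas are always established (and, to the best of my knowledge, how the original source proceeds). The forward direction and the reduction to the one-edge statement are correct, and your crossing-edge count $|N_H(u)\cap B|+|N_H(v)\cap A|\geq k+1$ together with the pigeonhole step in the generic case ($u,v\notin S$ and $\deg_T(u),\deg_T(v)\geq 3$) is sound.

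The only soft spot is the handling of the boundary sub-cases, which you dispatch with ``a finite case check.'' These cases are where the real work lies. For instance, when $u\in S$ (so $A=\{u\}$) and simultaneously $\deg_T(v)=2$, every single-edge swap $uv\mapsto uy$ turns $v$ into an unwanted leaf; the two-edge rotation you allude to must then use a neighbour of $v$ outside $S$ (available since $d_H(v)\geq k+1$ by your observation) and reattach $v$ before or while reconnecting $u$, and one has to verify that the resulting graph is still a tree with leaf set exactly $S$. None of this is deep, but it is genuinely fiddly, and a complete write-up would spell out each degenerate configuration rather than invoke an unspecified case check. As a proof \emph{sketch} your proposal is adequate and points in the right direction; as a finished proof it leaves the most delicate part unverified.
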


An important upper bound on the spectral radius $\rho(G)$ is as follows.

\begin{lem}[Hong, Shu and Fang \cite{Hong2001}, Nikiforov \cite{Nikiforov2002}]\label{le8}
Let $G$ be a graph with minimum degree $\delta.$ Then
$$\rho(G)\leq \frac{\delta-1}{2}+\sqrt{2e(G)-\delta n+\frac{(\delta+1)^{2}}{4}}.$$
\end{lem}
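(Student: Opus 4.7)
My plan is to iterate the eigenvalue equation twice on a non-negative $\rho$-eigenvector and then apply a simple double-counting bound on the neighbor-degree sum. By Perron-Frobenius applied to the non-negative symmetric matrix $A(G)$, there exists an eigenvector $x\geq 0$, $x\neq 0$, associated with its largest eigenvalue $\rho$. Summing the coordinate equation $\rho x_v = \sum_{u\sim v} x_u$ over all $v$ and swapping the order of summation yields
$$
\rho \sum_{v} x_v \;=\; \sum_{u} d_u\, x_u. \qquad (*)
$$
Multiplying $(*)$ by $\rho$ and reusing $\rho x_u = \sum_{w \sim u} x_w$ gives
$$
\rho^2 \sum_v x_v \;=\; \sum_u d_u\, \rho x_u \;=\; \sum_w x_w\, t_w, \qquad (**)
$$
where $t_w := \sum_{u \sim w} d_u$ is the sum of degrees of the neighbors of $w$.

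The heart of the argument is an upper bound on $t_w$ in terms of $n$, $e(G)$, $d_w$, and $\delta$. Splitting the handshake identity according to whether a vertex is $w$, a neighbor of $w$, or outside $N[w]$, I get $2e(G) = d_w + t_w + \sum_{u\notin N[w]} d_u$; since there are $n-1-d_w$ vertices outside $N[w]$ and each has degree at least $\delta$, this produces
$$
t_w \;\le\; 2e(G) - d_w - \delta\bigl(n-1-d_w\bigr) \;=\; 2e(G) - \delta(n-1) + (\delta-1)\,d_w.
$$
Plugging this bound into $(**)$ and using $(*)$ in the form $\sum_w d_w x_w = \rho \sum_v x_v$ gives
$$
\rho^2 \sum_v x_v \;\le\; \bigl[2e(G) - \delta(n-1)\bigr]\sum_v x_v \;+\; (\delta-1)\rho\sum_v x_v,
$$
and dividing through by $\sum_v x_v>0$ collapses the whole computation to the quadratic inequality
$$
\rho^2 - (\delta-1)\rho \;\le\; 2e(G) - \delta(n-1).
$$

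To finish, I complete the square on the left and observe that $(\delta-1)^2/4 + \delta = (\delta+1)^2/4$, whence
$$
\Bigl(\rho - \tfrac{\delta-1}{2}\Bigr)^{2} \;\le\; 2e(G) - \delta n + \tfrac{(\delta+1)^2}{4};
$$
extracting the positive square root yields the stated inequality. The only subtlety I anticipate is ensuring the positivity $\sum_v x_v > 0$ required for the division, which is immediate since $x$ is a non-zero non-negative vector (so a strictly positive choice exists on the component realising $\rho$, and extending by zero on other components does not affect any step). The single structural input is the neighbor-degree estimate on $t_w$; everything else is bookkeeping.
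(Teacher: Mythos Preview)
The paper does not supply its own proof of this lemma; it is quoted from Hong--Shu--Fang \cite{Hong2001} and Nikiforov \cite{Nikiforov2002} as a known result, so there is nothing to compare against directly. Your argument is correct and is essentially the standard proof: iterating the eigenvalue equation twice against a Perron vector, bounding the neighbor-degree sum $t_w$ via $2e(G)=d_w+t_w+\sum_{u\notin N[w]}d_u\ge d_w+t_w+\delta(n-1-d_w)$, and then solving the resulting quadratic in $\rho$. The only point you leave implicit is that taking the positive square root at the end is legitimate; this follows either from $\rho\ge\bar d\ge\delta\ge\tfrac{\delta-1}{2}$, or simply from $\rho-\tfrac{\delta-1}{2}\le\bigl|\rho-\tfrac{\delta-1}{2}\bigr|$ together with the non-negativity of the radicand, which your own inequality $(\rho-\tfrac{\delta-1}{2})^2\le 2e(G)-\delta n+\tfrac{(\delta+1)^2}{4}$ already guarantees.
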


The following observation is very useful when we use the above upper bound on $\rho(G).$

\begin{pro}[Hong, Shu and Fang \cite{Hong2001}, Nikiforov \cite{Nikiforov2002}]\label{pro9}
For graph $G$ with $2e(G)\leq n(n-1),$ the function
$$f(x)=\frac{x-1}{2}+\sqrt{2e(G)-nx+\frac{(x+1)^{2}}{4}}$$
is decreasing with respect to $x$ for $0\leq x\leq n-1.$
\end{pro}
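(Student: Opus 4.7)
The plan is to prove the proposition by a direct calculus argument: compute $f'(x)$ explicitly and show that the condition $f'(x)\le 0$ on $[0,n-1]$ is algebraically equivalent to the hypothesis $2e(G)\le n(n-1)$.

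First I would introduce the auxiliary function
\[
g(x)=2e(G)-nx+\frac{(x+1)^{2}}{4},
\]
so that $f(x)=\tfrac{x-1}{2}+\sqrt{g(x)}$ whenever $g(x)\ge 0$. Differentiating gives $g'(x)=-n+\tfrac{x+1}{2}$, and therefore
\[
f'(x)=\frac{1}{2}+\frac{g'(x)}{2\sqrt{g(x)}}=\frac{1}{2}-\frac{n-\tfrac{x+1}{2}}{2\sqrt{g(x)}}.
\]
For $0\le x\le n-1$ we have $\tfrac{x+1}{2}\le \tfrac{n}{2}<n$, so the quantity $n-\tfrac{x+1}{2}$ is strictly positive. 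Hence $f'(x)\le 0$ is equivalent to $\sqrt{g(x)}\le n-\tfrac{x+1}{2}=\tfrac{2n-x-1}{2}$, and since both sides are non-negative we may square without changing the inequality.

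The squared form reads $g(x)\le \tfrac{(2n-x-1)^{2}}{4}$, i.e.
\[
2e(G)-nx+\frac{(x+1)^{2}}{4}\le \frac{(2n-x-1)^{2}}{4}.
\]
Using the difference of squares
\[
(2n-x-1)^{2}-(x+1)^{2}=\bigl((2n-x-1)-(x+1)\bigr)\bigl((2n-x-1)+(x+1)\bigr)=(2n-2x-2)(2n)=4n(n-x-1),
\]
the inequality collapses to $2e(G)-nx\le n(n-x-1)$, which simplifies to $2e(G)\le n(n-1)$. This is precisely the hypothesis, so $f'(x)\le 0$ holds throughout $[0,n-1]$ and $f$ is decreasing there.

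There is essentially one place where caution is required, and it is the likely stumbling block: one must make sure the argument stays inside the domain of $f$, i.e.\ that $g(x)\ge 0$ on the interval under consideration. Because $g'(x)=\tfrac{x+1}{2}-n<0$ on $[0,n-1]$, $g$ is strictly decreasing there, so its non-negativity region is an initial subinterval $[0,x_{0}]\subseteq[0,n-1]$; the monotonicity conclusion applies on this subinterval (which is all that is needed in the subsequent spectral arguments, where $x$ is taken to equal the minimum degree $\delta$ and the radicand is non-negative by construction). With that caveat recorded, the derivative computation above completes the proof.
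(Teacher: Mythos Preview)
Your calculus argument is correct: differentiating, clearing the square root, and reducing to $2e(G)\le n(n-1)$ is exactly the right computation, and your caveat about the radicand $g(x)$ possibly becoming negative before $x=n-1$ is well placed (in the applications one only needs monotonicity up to $x=\delta$, where $2e(G)\ge \delta n$ guarantees $g(\delta)\ge \tfrac{(\delta+1)^{2}}{4}>0$).

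There is nothing to compare against, however: the paper does not supply a proof of this proposition. It is quoted as a known observation of Hong--Shu--Fang and Nikiforov and used as a black box, so your write-up in fact adds a self-contained verification that the surveyed sources omit here.
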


Feng and Yu \cite{Feng2009} proved an upper bound on $q(G)$, which has been widely used in the literature.

\begin{lem}[Feng and Yu \cite{Feng2009}]\label{le10}
Let $G$ be a connected graph on $n$ vertices and $e(G)$ edges.
Then
$$q(G)\leq \frac{2e(G)}{n-1}+n-2.$$
\end{lem}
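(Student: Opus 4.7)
The plan is to prove the inequality via Perron–Frobenius analysis of the signless Laplacian. Since $G$ is connected, $Q(G)=D(G)+A(G)$ is nonnegative and irreducible, so it admits a strictly positive eigenvector $\mathbf{x}=(x_1,\dots,x_n)^T$ for $q(G)$. I will normalize so that $\sum_i x_i = 1$. Writing out the eigenvalue equation componentwise, $q(G)\,x_i = d(v_i)\,x_i + \sum_{v_j\sim v_i} x_j$, and summing over $i$ (each edge $v_jv_k$ contributes $x_j+x_k$ exactly once to the double sum) yields the basic identity $q(G) = 2\sum_i d(v_i)\,x_i$.

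Next I would use the trivial bound $\sum_{v_j\sim v_i} x_j \le 1 - x_i$ to conclude that $(q(G)+1-d(v_i))\,x_i \le 1$ for every $i$. Multiplying this pointwise inequality by $d(v_i)\ge 0$ and summing gives $(q(G)+1)\sum_i d(v_i)\,x_i - \sum_i d(v_i)^2\,x_i \le \sum_i d(v_i) = 2e(G)$. Substituting the identity $\sum_i d(v_i)\,x_i = q(G)/2$ reduces the problem to controlling $\sum_i d(v_i)^2\,x_i$ from above. Re-applying the eigenvalue equation in the form $d(v_i)\,x_i = q(G)\,x_i - \sum_{v_j\sim v_i} x_j$ expresses $\sum_i d(v_i)^2\,x_i$ as $\tfrac{1}{2}q(G)^2 - \sum_j x_j \sum_{v_i\sim v_j} d(v_i)$, linking the target quantity to a neighbourhood-degree sum that one can bound in terms of $e(G)$ and $n$ to obtain the rearranged inequality $(n-1)\bigl(q(G)-(n-2)\bigr) \le 2e(G)$.

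The main obstacle is executing the final estimate so that the coefficient of $e(G)$ is exactly $2/(n-1)$ and the additive constant is exactly $n-2$. Feng and Yu's bound is sharp both on $K_n$ (where $\mathbf{x}$ is uniform and $d(v_i)=n-1$ for all $i$) and on the star $K_{1,n-1}$ (where $\mathbf{x}$ concentrates on the center and the degree sequence is $(n-1,1,\dots,1)$). Crude bounds such as $d(v_i)\le n-1$ or $d(v_i)\ge \delta$ saturate on only one of these extremes; for instance, bounding $\sum d(v_i)^2\,x_i \le (n-1)\sum d(v_i)\,x_i$ produces a quadratic-in-$q(G)$ inequality that is tight on $K_n$ but loses the correct scaling for sparse graphs like the star. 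A successful argument must therefore interpolate between the two regimes, most plausibly via a weighted Jensen-type inequality applied to the sequence $\{1/(q(G)+1-d(v_i))\}_{i=1}^{n}$, or via a refined Cauchy--Schwarz that simultaneously exploits $\sum_i x_i = 1$ and $\sum_i x_i^2 \le 1$. Pinning down this interpolation so that equality is attained at both extremal configurations is the delicate step I expect to be the crux of the proof.
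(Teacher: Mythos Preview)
The paper does not prove this lemma; it is quoted verbatim from Feng and Yu \cite{Feng2009} as a preliminary result, with no argument supplied. There is therefore no proof in the paper to compare against, and the relevant question is whether your outline stands on its own.

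It does not. The identities you set up are correct: with the Perron vector normalized by $\sum_i x_i=1$ one has $q(G)=2\sum_i d(v_i)x_i$, and the pointwise bound $(q(G)+1-d(v_i))x_i\le 1$ holds. But after multiplying by $d(v_i)$ and summing you arrive at an inequality involving $\sum_i d(v_i)^2 x_i$, and you then state openly that controlling this term so as to land on the exact constants $\tfrac{2}{n-1}$ and $n-2$ is ``the crux'' and ``the main obstacle''---without carrying it out. A gesture toward a ``weighted Jensen-type inequality'' or a ``refined Cauchy--Schwarz'' is not a proof; this is precisely the step that needs to be supplied.

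There is, moreover, a concrete warning sign that this particular route is not straightforward. On the star $K_{1,n-1}$, which you correctly identify as an equality case of the Feng--Yu bound, your intermediate inequality
\[
\frac{(q+1)q}{2}-\sum_i d(v_i)^2 x_i \;\le\; 2e(G)
\]
is already \emph{strict} for every $n>2$: with $q=n$, $e=n-1$, $x_{\mathrm{center}}=\tfrac12$ and $x_{\mathrm{leaf}}=\tfrac{1}{2(n-1)}$, the left side equals $\tfrac{3n-2}{2}$ while the right side is $2n-2$. Hence any subsequent upper bound on $\sum_i d(v_i)^2 x_i$ would have to be tight on $K_n$ yet deliberately slack on the star by exactly the right amount, which is not what a standard convexity or Cauchy--Schwarz argument produces. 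As written, the proposal has a genuine gap at its final step.
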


Let $M$ be the following $n\times n$ matrix
\[
M=\left(\begin{array}{ccccccc}
M_{1,1}&M_{1,2}&\cdots &M_{1,m}\\
M_{2,1}&M_{2,2}&\cdots &M_{2,m}\\
\vdots& \vdots& \ddots& \vdots\\
M_{m,1}&M_{m,2}&\cdots &M_{m,m}\\
\end{array}\right),
\]
whose rows and columns are partitioned into subsets $X_{1}, X_{2},\ldots ,X_{m}$ of $\{1,2,\ldots, n\}$.
The quotient matrix $R(M)$ of the matrix $M$ (with respect to the given partition)
is the $m\times m$ matrix whose entries are the
average row sums of the blocks $M_{i,j}$ of $M$.
The above partition is called equitable
if each block $M_{i,j}$ of $M$ has constant row (and column) sum.

\begin{lem}[Brouwer and Haemers \cite{Brouwer2011}, Godsil and Royle \cite{Godsil2001}, Haemers \cite{Haemers1995}]\label{le15}
Let $M$ be a real symmetric matrix and let $R(M)$ be its equitable quotient matrix.
Then the eigenvalues of the quotient matrix $R(M)$ are eigenvalues of $M$.
Furthermore, if $M$ is nonnegative and irreducible, then the spectral radius of the quotient matrix $R(M)$ equals to the spectral
radius of $M$.
\end{lem}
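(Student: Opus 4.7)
The plan is to work with the $(n+k-1)$-closure $H = C_{n+k-1}(G)$; by Lemma~\ref{le111}, $H$ is $k$-leaf-connected if and only if $G$ is, and since closure only adds edges we have $e(H) \geq e(G) \geq \binom{n-3}{2}+3k+5$ and $\delta(H) \geq \delta(G) \geq k+1$. I would assume for contradiction that $H$ is not $k$-leaf-connected and is not isomorphic to any of the three listed graphs, and derive a contradiction.

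First, Lemma~\ref{le16} produces an integer $i$ with $k \leq i \leq (n+k-2)/2$ satisfying $d_{i-k+1}(H) \leq i$ and $d_{n-i}(H) \leq n-i+k-2$. The bound $\delta(H) \geq k+1$ excludes $i = k$, so $i \geq k+1$. Partitioning the vertex set into $A = \{v_1, \ldots, v_{i-k+1}\}$, $B = \{v_{i-k+2}, \ldots, v_{n-i}\}$, $C = \{v_{n-i+1}, \ldots, v_n\}$ and summing the obvious degree upper bounds yields
\[
2e(H) \leq (i-k+1)i + (n-2i+k-1)(n-i+k-2) + i(n-1).
\]
The right-hand side is a quadratic in $i$; comparing with $2\bigl(\binom{n-3}{2}+3k+5\bigr) = n^2-7n+6k+22$ and using $n \geq k+17$, one checks that for every $i \geq k+3$ (up to the upper limit $(n+k-2)/2$) the bound drops strictly below the threshold. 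This forces $i \in \{k+1, k+2\}$.

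The structural core of the proof exploits the $(n+k-1)$-closedness of $H$: for every non-adjacent pair $u, v \in V(H)$, $d_H(u) + d_H(v) \leq n+k-2$. In the case $i = k+1$, $\delta(H) \geq k+1$ combined with $d_2 \leq k+1$ forces $d(v_1) = d(v_2) = k+1$, so the closure inequality implies that every vertex of degree $\geq n-2$ is a common neighbor of $v_1$ and $v_2$. Splitting on whether $v_1v_2 \in E(H)$ and tightly counting edges between $A$, $B$, and $C$ (using $e(C) \leq \binom{k+1}{2}$ and the degree ceiling $n-3$ on $B$) extracts a clique of order $\geq n-3$ in $H$; analysis of the remaining $2$ or $3$ vertices then forces $H \in \{K_k \vee (K_{n-k-2}+K_2), K_3 \vee (K_{n-5}+2K_1)\}$, the second option requiring $k = 2$ so that $\delta = 3 \geq k+1$.

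In the case $i = k+2$, the crude bound exceeds the threshold by at most $1$, so essentially every inequality in the degree-sum bookkeeping must be an equality. This pins the degree sequence to $d_1 = d_2 = d_3 = k+1$ (or very close), $d_j = n-4$ for $j \in B$, and $C$ a clique of order $k+2$, leaving only $K_4 \vee (K_{n-7}+3K_1)$ as a possibility; the constraint $\delta \geq k+1$ then restricts this graph to $k \leq 3$, and a direct edge-count check rules out $k = 3$, so this exception arises only when $k = 2$. The main obstacle is the structural analysis in the case $i = k+1$: the slack between the crude bound and the threshold can be of order $n$, so a single closure inequality is insufficient. One must iteratively use $d_H(u) + d_H(v) \leq n+k-2$ across non-adjacent pairs, force $C$ to be a near-complete clique, and carefully exhaust the arrangements of the two low-degree vertices (and any missing edges inside $C$), while treating $k = 2$ separately to admit the extra exceptional graph $K_3 \vee (K_{n-5}+2K_1)$.
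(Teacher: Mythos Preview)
Your proposal does not address the stated lemma at all. The statement you were asked to prove is Lemma~\ref{le15}, a standard spectral result: for a real symmetric matrix $M$ with an equitable partition, the eigenvalues of the quotient matrix $R(M)$ are among the eigenvalues of $M$, and in the nonnegative irreducible case the two spectral radii coincide. The paper does not prove this lemma; it is quoted from the references \cite{Brouwer2011,Godsil2001,Haemers1995}, and a proof would proceed by lifting an eigenvector of $R(M)$ to a block-constant eigenvector of $M$ (equitability guarantees this works), together with Perron--Frobenius for the second assertion.

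What you have written is instead a sketch of the proof of Theorem~\ref{main} (via Lemma~\ref{le11}): working in the closure $H=C_{n+k-1}(G)$, invoking the degree-sequence criterion of Lemma~\ref{le16}, bounding $e(H)$ by a quadratic in $i$, reducing to $i\in\{k+1,k+2\}$, and then carrying out a structural analysis to pin down the exceptional graphs. None of this has any bearing on quotient matrices or their spectra. Either you misidentified which statement was being requested, or you pasted the wrong write-up; in any case, as a proof of Lemma~\ref{le15} the proposal is vacuous.
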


\section{Proof of Theorem \ref{main}}
Before presenting our main result, we first show that an $(n+k-1)$-closed non-$k$-leaf-connected graph $G$ must contain a large clique
if its number of edges is large enough. We denote by $\omega(G)$ the clique number of $G.$ Let $(d_{1}, d_{2}, \ldots, d_{n})$ be the degree sequence of $G$, where $d_{1}\leq d_{2}\leq \cdots \leq d_{n}$.

\begin{lem}\label{le11}
Let $G$ be an $(n+k-1)$-closed non-$k$-leaf-connected graph of order $n\geq k+17$
with $\delta\geq k+1$ and $k\geq2$. If $$e(G)\geq {n-3\choose 2}+3k+5,$$ then $\omega(G)=n-2$ unless $G\cong K_{4}\vee(K_{n-7}+3K_{1})$.
\end{lem}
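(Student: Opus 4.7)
The plan is to apply Lemma \ref{le16} to supply a witness integer $i$, bound $e(G)$ from above via the implied degree blocks, and then unwind the tight structure using the $(n+k-1)$-closure hypothesis.

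Since $G$ is non-$k$-leaf-connected, the contrapositive of Lemma \ref{le16} yields an integer $i$ with $k\le i\le (n+k-2)/2$ such that $d_{i-k+1}\le i$ and $d_{n-i}\le n-i+k-2$, while $\delta\ge k+1$ rules out $i=k$. Partitioning the degree sequence according to these inequalities gives
\[
2e(G)\le i(i-k+1)+(n-2i+k-1)(n-i+k-2)+i(n-1)=:f(i),
\]
a direct simplification of which is the convex quadratic $f(i)=3i^2-(2n+4k-5)i+(n+k-1)(n+k-2)$. Combining with the hypothesis $2e(G)\ge(n-3)(n-4)+6k+10$, the inequality $f(i)\ge(n-3)(n-4)+6k+10$ at $i=k+3$ rearranges to $n\le k+11$, and at the upper endpoint $i=(n+k-2)/2$ (where $f=(n+k-2)(3n-k)/4$) to $(n-k-11)^2\le 33$; both fail for $n\ge k+17$, and by convexity of $f$ every $i\in[k+3,(n+k-2)/2]$ is likewise excluded. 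Hence $i\in\{k+1,k+2\}$.

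\textit{Case $i=k+2$.} The slack $f(k+2)-2e(G)\le 2$, together with $\delta\ge k+1$ and the closure inequality $d(u)+d(v)\le n+k-2$ for every nonadjacent pair $u,v$, freezes the degree sequence: any slack in the low or high blocks contradicts the universality of high-degree vertices forced by $\delta\ge k+1$, and middle slack is ruled out by counting non-neighbours. This pins the structure as $G\cong K_{k+2}\vee(K_{n-k-5}+3K_1)$. For $k=2$ this is the stated exception $K_4\vee(K_{n-7}+3K_1)$; for $k\ge 3$ one verifies directly (by exhibiting a spanning tree with the prescribed leaves) that $K_{k+2}\vee(K_{n-k-5}+3K_1)$ is $k$-leaf-connected, contradicting the hypothesis.

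\textit{Case $i=k+1$.} Here $d_1=d_2=k+1$; let $v_1,v_2$ be two vertices of minimum degree. We consider three sub-cases according to $d_3$. If $d_3\ge k+3$, the closure bound combined with successive elimination of low degrees (the only vertices of degree $\le k+2$ are $v_1,v_2$, so a vertex $u\in V(G)\setminus\{v_1,v_2\}$ of degree $\le n-4$ would have too many forbidden non-neighbours) forces every vertex of $V(G)\setminus\{v_1,v_2\}$ to have degree $\ge n-3$; then any nonadjacent pair there would give $d(u)+d(w)\ge 2(n-3)>n+k-2$, a contradiction, so $V(G)\setminus\{v_1,v_2\}$ is a clique of order $n-2$ and $\omega(G)=n-2$. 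If $d_3=k+2$, the closure on $v_3$ constrains the number of high-degree vertices and forces $d_{n-k-2}\le n-4$, reducing to Case $i=k+2$. The remaining sub-case $d_3=k+1$ is ruled out by an edge-count refinement whose achievable upper bound $n^2-7n+6k+18$ falls short of the hypothesis $n^2-7n+6k+22$.

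The main obstacle is the structural analysis in Case $i=k+2$: isolating $K_4\vee(K_{n-7}+3K_1)$ as the unique surviving non-$(n-2)$-clique configuration requires careful elimination of slack distributions under the closure condition, and verifying by explicit spanning-tree construction that $K_{k+2}\vee(K_{n-k-5}+3K_1)$ is $k$-leaf-connected for $k\ge 3$ is what automatically restricts the exception to $k=2$. A secondary subtlety in Case $i=k+1$ is the reduction of the $d_3=k+2$ sub-case to Case $i=k+2$, which arises from $v_3$'s limited neighbourhood forcing high-degree vertices to cluster.
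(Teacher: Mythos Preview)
Your overall architecture matches the paper: invoke Lemma~\ref{le16} to get a witness $i$, bound $2e(G)$ by the quadratic $f(i)$, eliminate $i\ge k+3$ by convexity, and then treat $i=k+2$ and $i=k+1$ separately. The $i=k+2$ sketch and the reduction of the sub-case $d_3=k+2$ to $i=k+2$ are sound, and your $d_3=k+1$ edge bound $2e(G)\le n^2-7n+6k+18$ is correct (it follows from the refinement that a vertex outside $\{v_1,v_2,v_3\}$ non-adjacent to all three has degree $\le n-4$).

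The genuine gap is the sub-case $d_3\ge k+3$ of $i=k+1$. Your assertion that every vertex of $V(G)\setminus\{v_1,v_2\}$ has degree $\ge n-3$ is the correct conclusion, but the justification ``too many forbidden non-neighbours'' only works for $d(u)=n-4$: there a non-neighbour must have degree $\le k+2$, and only $v_1,v_2$ qualify, contradicting the three required non-neighbours. For $d(u)\le n-5$ the bound on non-neighbours relaxes to $\le k+3$, and nothing prevents several vertices from having degree exactly $k+3$; a vertex of degree $n-5$ can then legitimately have $v_1,v_2$ and two degree-$(k+3)$ vertices as its four non-neighbours. The ``successive elimination'' does not cascade, because eliminating degree $n-4$ does not reduce the pool of low-degree candidates for the next step. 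The paper closes this case differently: it first shows (Claim~4) that at most three vertices of $V_2$ have degree below $(n+k-1)/2$, which yields a clique $C=V_2^*\cup V_3$ of size $\ge n-5$; it then observes (Claim~6) that any vertex outside $C$ has degree $\le n+k-\omega-1$, and a second edge count $e(G)\le f_2(\omega)$ rules out $\omega\in\{n-5,n-4,n-3\}$. You need an argument of this type here; your closure-only reasoning is insufficient.

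A minor omission: you never argue $\omega(G)\le n-2$. The paper handles this at the outset by noting that an $(n-1)$-clique together with $\delta\ge k+1$ and closure forces $G\cong K_n$, which is $k$-leaf-connected.
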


\begin{proof} Note that $\delta\geq k+1.$ First we claim that $\omega(G)\leq n-2$.
Otherwise, suppose that $\omega(G)\geq n-1$, then $G$ contains an $(n-1)$-clique,
and hence for any two vertices $u,v\in V(G),$ we always have $d_{G}(u)+d_{G}(v)\geq n+k-1.$
If there exists two vertices $uv\notin E(G),$ then $d_{G}(u)+d_{G}(v)\leq n+k-2$ since $G$ is an $(n+k-1)$-closed graph, a contradiction.
Hence any two vertices of $G$ are adjacent. That is, $G\cong K_{n},$ and obviously $G$ is $k$-leaf-connected, a contradiction.

Let $(d_{1}, d_{2}, \ldots ,d_{n})$ be the degree sequence of $G$
with $d_{1}\leq d_{2}\leq \cdots \leq d_{n}.$ Note that $G$ is not $k$-leaf-connected.
By Lemma \ref{le16}, there exists an integer $i$ with
$k\leq i\leq\frac{n+k-2}{2}$ such that $d_{i-k+1}\leq i$ and $d_{n-i}\leq n-i+k-2$.
Then we have
\begin{eqnarray*}
e(G)&=&\frac{1}{2}\sum_{j=1}^{n}d_{j} \\
&=&\frac{1}{2}(\sum_{j=1}^{i-k+1}d_{j}+\sum_{j=i-k+2}^{n-i}d_{j}+\sum_{j=n-i+1}^{n}d_{j}) \\
&\leq&\frac{1}{2}[(i-k+1)i+(n-2i+k-1)(n-i+k-2)+i(n-1)] \\
&=&{n-3\choose 2}+3k+5+\frac{f_{1}(i)}{2},
\end{eqnarray*}
where $$f_{1}(i)=3i^{2}-(2n+4k-5)i+(2k+4)n+k^{2}-9k-20.$$
By the assumption $e(G)\geq {n-3\choose 2}+3k+5,$ then we have $f_{1}(i)\geq0$.
Note that $k+1\leq \delta \leq d_{i-k+1}\leq i\leq\frac{n+k-2}{2}$. We shall divide the proof into the
following three cases.

\vspace{1.5mm}
\noindent\textbf{Case 1.} $k+3 \leq i\leq\frac{n+k-2}{2}.$
\vspace{1mm}

Since $f_{1}''(i)=6>0$, then $f_{1}(i)$ is a concave function on $i$. For $n\geq k+17$, we have
$$f_{1}(k+3)=-2n+2k+22<0,$$
$$\mbox{and}~~~~~f_{1}(\frac{n+k-2}{2})=-\frac{n^{2}}{4}+\frac{k+11}{2}n-\frac{k^{2}}{4}-\frac{11k}{2}-22<0.$$
This implies that $f_{1}(i)<0$, a contradiction.

\vspace{1.5mm}
\noindent\textbf{Case 2.} $i=k+2.$
\vspace{1mm}

Then the corresponding degree sequence of $G$ is
$$\underbrace{d_{1}\leq d_{2}\leq d_{3}\leq k+2}_{V_{1}},~\underbrace{d_{4}\leq d_{5}\leq \cdots\leq d_{n-k-2}\leq n-4}_{V_{2}},
~\underbrace{d_{n-k-1}\leq d_{n-k}\leq \cdots \leq d_{n}\leq n-1}_{V_{3}}.$$
According to the above degree sequence, we divide $V(G)$ into three parts: $V_{1}$, $V_{2}$ and $V_{3}.$

\begin{claim}\label{cla1}
{\rm There is no vertex of degree less than $\frac{n+k-1}{2}$ in $V_{2}$.}
\end{claim}

\begin{proof}
Suppose that there exists a vertex of degree less than  $\frac{n+k-1}{2}$ in $V_{2}$. Then
\begin{eqnarray*}
e(G)&=&\frac{1}{2}\sum_{j=1}^{n}d_{j} \\
&<&\frac{1}{2}\big[3(k+2)+(n-k-6)(n-4)+(k+2)(n-1)+\frac{n+k-1}{2}\big] \\
&=&{n-3\choose 2}+3k+5-\frac{n-k-11}{4}\\
&\leq&{n-3\choose 2}+3k+5-\frac{3}{2}\\
&<&e(G),
\end{eqnarray*}
a contradiction, since $n\geq k+17$.
\end{proof}

By Claim \ref{cla1}, it follows that $d_{G}(u)+d_{G}(v)\geq n+k-1$ for any two different vertices $u,v\in V_{2}\cup  V_{3}.$
Note that $G$ is $(n+k-1)$-closed. Then $V_{2}\cup V_{3}$ is a clique of $G,$
and hence $$\omega(G)\geq |V_{2}\cup V_{3}|\geq (n-k-5)+(k+2)=n-3.$$ Recall that $\omega(G)\leq n-2.$
Then we have $$n-3\leq\omega(G)\leq n-2.$$
If $\omega(G)=n-2,$ then $d_{3}\geq n-3.$ Note that $d_{3}\leq k+2.$ Then $n\leq k+5,$
which contradicts $n\geq k+17$.
Thus, we have $\omega(G)=n-3$. Let $C=V_{2}\cup  V_{3}$. Note that $|C|=n-3.$ Then $C$ is a maximum clique of $G,$
and $V(G)=V_{1}\cup C.$
Notice that $k+1\leq\delta\leq d_{G}(v)\leq k+2$ for each $v\in V_{1}.$
Let $V_{1}=\{v_{1}, v_{2}, v_{3}\}$ and $V_{1}^{*}=\{v_{i}\in V_{1}~|~ d_{G}(v_{i})=k+2\}$.

\begin{figure}
\centering
\includegraphics[width=0.25\textwidth]{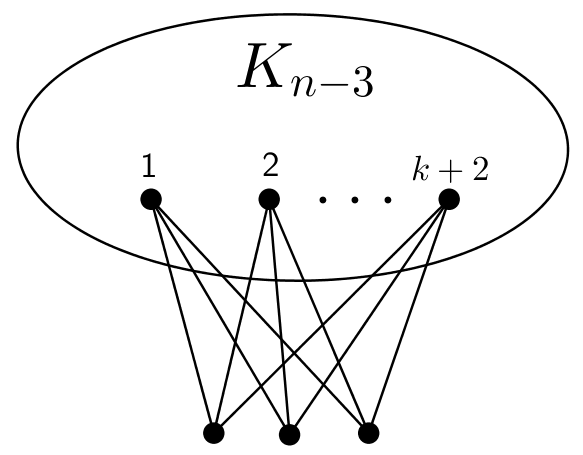}\\
\caption{Graph $K_{k+2}\vee(K_{n-k-5}+3K_{1}).$
}\label{fig1}
\end{figure}

\begin{claim}\label{cla2}
{\rm $|V_{1}^{*}|\geq 2$.}
\end{claim}

\begin{proof}
Suppose, to the contrary, that $|V_{1}^{*}|\leq 1.$ Note that $k+1\leq d_{G}(v_{i})\leq k+2$ for any $v_{i}\in V_{1}$. Then
$$e(G)\leq e(C)+\sum_{i=1}^{3}d_{G}(v_{i})\leq{n-3\choose 2}+2(k+1)+(k+2)={n-3\choose 2}+3k+4<e(G),$$
a contradiction.
\end{proof}

Define $C^{*}=\{v\in C~|~N_{G}(v)\cap V_{1}\neq \emptyset\}$.

\begin{claim}\label{cla3}
{\rm $|C^{*}|=k+2$.}
\end{claim}

\begin{proof}
By the definition of $C^{*},$ we know that $d_{G}(v)\geq n-3$ for each $v\in C^{*}$. Then
$d_{G}(v)+d_{G}(v_{i})\geq (n-3)+(k+2)=n+k-1$ for any $v\in C^{*}$ and $v_{i}\in V_{1}^{*}.$
Note that $G$ is $(n+k-1)$-closed. It follows that each vertex of $C^{*}$ is adjacent to each vertex of $V_{1}^{*}.$
Combining Claim \ref{cla2}, we have $d_{G}(v)\geq d_{C}(v)+|V_{1}^{*}|\geq (n-4)+2=n-2$ for each $v\in C^{*}$.
Therefore, $d_{G}(v)+d_{G}(v_{i})\geq (n-2)+(k+1)=n+k-1$ for any $v\in C^{*}$ and $v_{i}\in V_{1}$.
Then each vertex of $V_{1}$ is adjacent to each vertex of $C^{*},$
which implies that $|C^{*}|\leq d_{G}(v_{i})\leq k+2,$ where $v_{i}\in V_{1}$.

On the other hand, let $e(V_{1},C)$ denote the number of edges between $V_{1}$ and $C$. Notice that $e(V_{1}, C)=e(V_{1}, C^{*})=|V_{1}||C^{*}|=3|C^{*}|$ and
$e(V_{1})=\frac{1}{2}(\sum_{v_{i}\in V_{1}}d_{G}(v_{i})-3|C^{*}|)\leq \frac{3(k+2-|C^{*}|)}{2}$.
Then $$e(G)=e(C)+e(V_{1}, C^{*})+e(V_{1})\leq {n-3\choose 2}+\frac{3(k+2+|C^{*}|)}{2}.$$
Combining the assumption $e(G)\geq {n-3\choose 2}+3k+5$, we have $|C^{*}|\geq k+2$.
Therefore, $|C^{*}|=k+2.$
\end{proof}

Recall that $d_{G}(v_{i})\leq k+2$ for each $v_{i}\in V_{1}$. According to Claim \ref{cla3}, $V_{1}$ is an independent set.
This implies that $G\cong K_{k+2}\vee(K_{n-k-5}+3K_{1})$ (see Fig. \ref{fig1}).
Define
$$L=V(K_{k+2}),~~
M=V(K_{n-k-5})~~
\mbox{and} ~~N=V(3K_{1}).$$
Notice that the vertices of $N$ are only adjacent to those of $L.$
When $k\geq3$, for any $S\subseteq V(G)$ with $|S|=k,$
we always find a spanning tree $T$ (see Fig. \ref{fig2}) such that $S$ is precisely the set of leaves (labeled by red vertices) of $T$.
Hence $K_{k+2}\vee(K_{n-k-5}+3K_{1})$ is $k$-leaf-connected, which contradicts the assumption.
However, $K_{4}\vee(K_{n-7}+3K_{1})$ is not $2$-leaf-connected.
Therefore, $G\cong K_{4}\vee(K_{n-7}+3K_{1}).$

\begin{figure}
\centering
\includegraphics[width=0.9\textwidth]{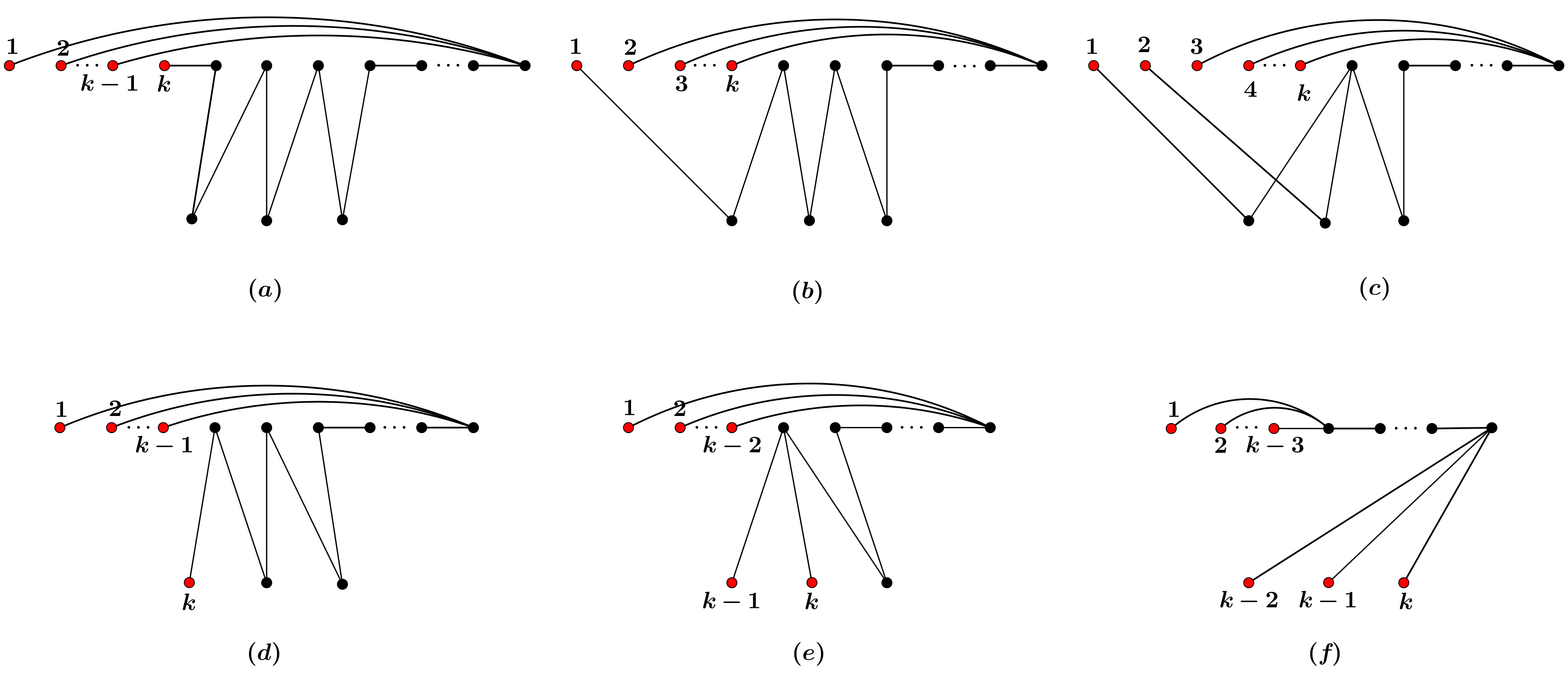}\\
\caption{
$(a)$. $k$ vertices are chosen from $M$;
$(b)$. One of $k$ vertices belongs to $L$, and the rest belong to $M$;
$(c)$. At least two vertices come from $L$, and the rest come from $M$;
$(d)$. One vertex is from $N$, and the remaining vertices come from $L\cup M$;
$(e)$. Two vertices belong to $N$, and the remaining vertices come from $L\cup M.$
$(f)$. Three vertices belong to $N$, and the remaining vertices come from $L\cup M.$
}\label{fig2}
\end{figure}

\vspace{1.5mm}
\noindent\textbf{Case 3.} $i=k+1.$
\vspace{1mm}

Then the degree sequence of $G$ is given by
$$\underbrace{d_{1}=d_{2}=k+1}_{V_{1}},~ \underbrace{d_{3}\leq d_{4}\leq \cdots\leq d_{n-k-1}\leq n-3}_{V_{2}},
~\underbrace{d_{n-k}\leq d_{n-k+1}\leq \cdots \leq d_{n}\leq n-1}_{V_{3}}.$$

\begin{claim}\label{cla4}
{\rm There are at most three vertices of degree less than $\frac{n+k-1}{2}$ in $V_{2}$.}
\end{claim}

\begin{proof}
Assume that there exist four vertices of degree less than $\frac{n+k-1}{2}$ in $V_{2}$. Then we have
\begin{eqnarray*}
e(G)&=&\frac{1}{2}\sum_{j=1}^{n}d_{j} \\
&<&\frac{1}{2}[2(k+1)+(n-k-7)(n-3)+(k+1)(n-1)+4\cdot\frac{n+k-1}{2}] \\
&=&{n-3\choose 2}+3k+4,\\
&<&e(G),
\end{eqnarray*}
a contradiction.
\end{proof}

Let $V_{2}^{*}=\{v\in V_{2}~|~d_{G}(v)\geq\frac{n+k-1}{2}\}$. By Claim \ref{cla4}, we have $|V_{2}^{*}|\geq|V_{2}|-3= n-k-6>0$.
It is clear that $d_{G}(u)+d_{G}(v)\geq n+k-1$ for any $u,v\in V_{2}^{*}\cup V_{3}.$ Note that $G$ is an $(n+k-1)$-closed graph.
This implies that $V_{2}^{*}\cup V_{3}$ is a clique of $G,$
and hence $\omega(G)\geq |V_{2}^{*}\cup V_{3}|\geq (n-k-6)+(k+1)=n-5$. Note that $\omega(G)\leq n-2$. Then we have $$n-5\leq\omega(G)\leq n-2.$$

Define $C=V_{2}^{*}\cup V_{3}$.
\begin{claim}\label{cla5}
{\rm $C$ is a maximum clique of $G$.}
\end{claim}

\begin{proof}
By the definition of $V_{2}^{*},$ we know that $d_{G}(u)<\frac{n+k-1}{2}\leq n-9<n-5$ for any $u\in V_{1}\cup (V_{2}\backslash V_{2}^{*})$, since $n\geq k+17.$
Hence there exists at least one vertex $v\in C$ such that $uv\notin E(G)$ for any $u\in V_{1}\cup (V_{2}\backslash V_{2}^{*}),$
and thus $u\notin C$. This implies that $C$ is a maximum clique of $G$.
\end{proof}

Next let $\omega(G)=\omega$ for short.

\begin{claim}\label{cla6}
{\rm $d_{G}(u)\leq n+k-\omega-1$ for each $u\in V_{2}\backslash V_{2}^{*}$.}
\end{claim}

\begin{proof}
Suppose, to the contrary, that $d_{G}(u)\geq n+k-\omega$ for each $u\in V_{2}\backslash V_{2}^{*}.$
Then $d_{G}(u)+d_{G}(v)\geq (n+k-\omega)+(\omega-1)=n+k-1$ for $u\in V_{2}\backslash V_{2}^{*}$ and $v\in C$.
Note that $G$ is an $(n+k-1)$-closed graph. Then $u$ is adjacent to every vertex of $C,$ and hence $C\cup\{u\}$ is a larger clique, which contradicts Claim \ref{cla5}.
\end{proof}

Notice that $|V_{2}\backslash V_{2}^{*}|=n-|V_{1}|-|V_{2}^{*}\cup V_{3}|=n-\omega-2.$ Hence by Claim \ref{cla6}, we obtain
$$\sum_{u \in V_{2}\backslash V_{2}^{*}}d_{G}(u)\leq (n-\omega-2)(n+k-\omega-1).$$
Then we have
\begin{eqnarray*}
e(G)&\leq& \sum_{u \in V_{1}}d_{G}(u)+\sum_{u \in V_{2}\backslash V_{2}^{*}}d_{G}(u)+e(V_{2}^{*}\cup V_{3}) \\
&\leq& 2(k+1)+(n-\omega-2)(n+k-\omega-1)+{\omega\choose 2}\\
&=&\frac{3}{2}\omega^{2}-(2n+k-\frac{5}{2})\omega+n^{2}+kn-3n+4\\
&\triangleq&f_{2}(\omega).
\end{eqnarray*}
Note that $f_{2}(\omega)$ is a concave function on $\omega.$
If $n-5\leq\omega(G)\leq n-3,$ then
$$e(G)\leq \max\{f_{2}(n-5),f_{2}(n-3)\}={n-3\choose 2}+3k+4<e(G).$$
a contradiction. Therefore, $\omega(G)=n-2$. This completes the proof.
\end{proof}

\begin{rem}
The sufficient condition in terms of edge in Lemma \ref{le11} is best possible. Let $G\cong K_{3}\vee (K_{n-6}+K_{2}+K_{1})$.
Note that $C_{n+1}(G)=G$. Then $G$ is not 2-leaf-connected and $e(G)={n-3\choose 2}+10$.
However, $\omega(G)=n-3.$
\end{rem}

Using the above technical Lemma \ref{le11}, we will present the proof of Theorem \ref{main}.

\medskip
\noindent  \textbf{Proof of Theorem \ref{main}.}
Suppose, to the contrary, that $G$ is not $k$-leaf-connected, where $n\geq k+17, \delta\geq k+1$ and $k\geq2$.
Let $H=C_{n+k-1}(G)$. By Lemma \ref{le111}, $H$ is not $k$-leaf-connected.
Note that $G\subseteq H$. By the assumption $e(G)\geq {n-3\choose 2}+3k+5$, then $e(H)\geq {n-3\choose 2}+3k+5$.
By Lemma \ref{le11}, either $\omega(H)=n-2$ or $H\cong K_{4}\vee(K_{n-7}+3K_{1})$.

Assume that $\omega(H)=n-2$. Next we will characterize the structure of $H$.
Let $C$ be an $(n-2)$-clique of $H$ and $F$ be a subgraph of $H$ induced by $V(H)\backslash C,$ and let $V(F)=\{v_{1}, v_{2}\}$.

\begin{claim}\label{cla7}
{\rm $d_{H}(v_{i})=k+1$ for each $v_{i}\in V(F).$}
\end{claim}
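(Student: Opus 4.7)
The plan is a short sandwich argument: the lower bound $d_H(v_i)\ge k+1$ is immediate from the hypothesis $\delta\ge k+1$, so the real content is to prove the matching upper bound $d_H(v_i)\le k+1$.

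First I would exploit that $C$ is a \emph{maximum} clique of $H$ of size $\omega(H)=n-2$. Because adding $v_i$ to $C$ cannot produce an $(n-1)$-clique, each $v_i\in V(F)$ must fail to be adjacent to at least one vertex of $C$; pick such a non-neighbor $u_i\in C$ so that $u_iv_i\notin E(H)$.

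Next I would apply the closure hypothesis. Since $H=C_{n+k-1}(G)$ is $(n+k-1)$-closed, the non-adjacent pair $\{u_i,v_i\}$ satisfies
\[
d_H(u_i)+d_H(v_i)\le n+k-2.
\]
Because $u_i$ lies in the clique $C$ of order $n-2$, it is already adjacent to the remaining $n-3$ vertices of $C$, hence $d_H(u_i)\ge n-3$. Substituting gives
\[
d_H(v_i)\le (n+k-2)-(n-3)=k+1,
\]
and combined with $d_H(v_i)\ge\delta\ge k+1$ this forces $d_H(v_i)=k+1$, as required.

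There is essentially no obstacle: the whole argument is a one-line closure estimate, and the only prerequisite one needs to verify is the existence of a non-neighbor of $v_i$ inside $C$, which follows directly from the maximality of the clique (if $v_i$ were adjacent to all of $C$, then $C\cup\{v_i\}$ would be an $(n-1)$-clique, contradicting $\omega(H)=n-2$). The same reasoning applies symmetrically to both $v_1$ and $v_2$, so one sweep handles both vertices of $F$.
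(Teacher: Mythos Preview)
Your proof is correct and is essentially the contrapositive of the paper's argument: the paper assumes $d_H(v_i)\ge k+2$ and shows via the closure property that $v_i$ would then be adjacent to every vertex of $C$, contradicting $\omega(H)=n-2$; you instead start from the existence of a non-neighbor $u_i\in C$ (guaranteed by $\omega(H)=n-2$) and apply the closure inequality directly to obtain $d_H(v_i)\le k+1$. The two arguments are logically equivalent and use the same ingredients.
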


\begin{proof}
Suppose there exists a vertex $v_{i}\in V(F)$ with $d_{H}(v_{i})\geq k+2.$
Then $d_{H}(v_{i})+d_{H}(v)\geq (k+2)+(n-3)=n+k-1$ for any $v\in C$.
Recall that $H=C_{n+k-1}(G).$ Then $v_{i}$ is adjacent to vertex $v$.
Note that $v$ is an arbitrary vertex of $C$. Hence $v_{i}$ is adjacent to all vertices of $C$.
This implies that $\omega(H)\geq n-1,$ a contradiction.
\end{proof}

\begin{claim}\label{cla8}
{\rm $N_{H}(v_{1})\cap C=N_{H}(v_{2})\cap C.$}
\end{claim}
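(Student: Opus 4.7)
The plan is to prove Claim~\ref{cla8} by contradiction, exploiting the fact that vertices in the large clique $C$ already have very high degree, so the $(n+k-1)$-closure property forces any remaining non-adjacencies to be resolved.

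Suppose for contradiction that $N_H(v_1)\cap C \neq N_H(v_2)\cap C$. By symmetry I may assume there exists a vertex $w\in C$ with $w\in N_H(v_1)$ but $w\notin N_H(v_2)$. First I would compute a lower bound on $d_H(w)$: since $w$ lies in the $(n-2)$-clique $C$, it is adjacent to the other $n-3$ vertices of $C$, and since $w\in N_H(v_1)$, it is also adjacent to $v_1$. Hence $d_H(w)\geq (n-3)+1 = n-2$.

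Next I combine this with Claim~\ref{cla7}, which gives $d_H(v_2)=k+1$. Therefore
$$d_H(v_2)+d_H(w)\geq (k+1)+(n-2)=n+k-1.$$
On the other hand, $v_2 w\notin E(H)$ by the choice of $w$, and since $H=C_{n+k-1}(G)$ is $(n+k-1)$-closed, every nonadjacent pair in $H$ satisfies the strict closure inequality $d_H(v_2)+d_H(w)\leq n+k-2$. This is the desired contradiction, so $N_H(v_1)\cap C\subseteq N_H(v_2)\cap C$, and the reverse inclusion follows by swapping the roles of $v_1$ and $v_2$.

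There is no real obstacle here: the argument is a one-step closure argument, and the only thing to be careful about is correctly lower-bounding $d_H(w)$ using both its neighbors inside $C$ and the outside neighbor $v_1$. The substantive structural work has already been done in Lemma~\ref{le11} and Claim~\ref{cla7}; Claim~\ref{cla8} is essentially a clean consequence of the definition of the $(n+k-1)$-closure combined with the high degree forced by the clique.
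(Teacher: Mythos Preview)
Your proof is correct and essentially identical to the paper's: both pick a vertex in $C$ adjacent to $v_1$, observe its degree is at least $n-2$, add $d_H(v_2)=k+1$ from Claim~\ref{cla7}, and invoke the closure property to force adjacency with $v_2$. The only cosmetic difference is that you phrase it as a contradiction while the paper argues directly.
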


\begin{proof}
Without loss of generality, assume that a vertex $v$ of $C$ is adjacent to $v_{1}$ of $F$, then $d_{H}(v)\geq n-2$.
Therefore, $d_{H}(v)+d_{H}(v_{2})\geq (n-2)+(k+1)=n+k-1.$
Note that $H=C_{n+k-1}(G).$ Then $v$ is also adjacent to vertex $v_{2}$.
Hence $N_{H}(v_{1})\cap C=N_{H}(v_{2})\cap C.$
\end{proof}

Let $|N_{H}(v_{i})\cap C|=t$. Note that $|V(F)|=2$. By Claim \ref{cla7}, we know that $d_{H}(v_{i})=k+1.$
Then $t\geq k.$ On the other hand, $t\leq d_{H}(v_{i})=k+1$. Hence $k\leq t\leq k+1.$
Next, we will discuss the following two cases.

\vspace{1.5mm}
\noindent\textbf{Case 1.} $t=k$.
\vspace{1mm}

Then $H\cong K_{k}\vee (K_{n-k-2}+K_{2})$. Note that $G-V(K_{k})$ is not connected.
Then $G$ has no spanning tree such that $V(K_{k})$ is precisely the set of leaves,
and this implies that $G$ is not $k$-leaf-connected. Note that $e(H)={n-2\choose 2}+2k+1>{n-3\choose 2}+3k+5.$
Hence $H\cong K_{k}\vee (K_{n-k-2}+K_{2}).$

\vspace{1.5mm}
\noindent\textbf{Case 2.} $t=k+1$.
\vspace{1mm}

Then $H\cong K_{k+1}\vee (K_{n-k-3}+2K_{1})$.
By Theorem 1.5 in \cite{Ao2022}, we know that $K_{k+1}\vee (K_{n-k-3}+2K_{1})$ is $k$-leaf-connected for $k\geq 3$, a contradiction.
However, $K_{3}\vee (K_{n-5}+2K_{1})$ is not 2-leaf-connected. Notice that $e(H)={n-2\choose 2}+6>{n-3\choose 2}+11$.
Therefore, $H\cong K_{3}\vee (K_{n-5}+2K_{1}).$

By the above proof, we have $H=C_{n+k-1}(G)\in\{ K_{k}\vee (K_{n-k-2}+K_{2}), K_{3}\vee (K_{n-5}+2K_{1}),
K_{4}\vee (K_{n-7}+3K_{1})\},$
as desired. \hspace*{\fill}$\Box$

\section{Applications}

As applications, we will provide sufficient spectral conditions to guarantee a graph to be $k$-leaf-connected.
The following lemmas are used in the sequel.

\begin{lem}\label{le12}
Let $H\cong K_{k}\vee (K_{n-k-2}+K_{2}).$ \\
(i) If $n\geq 2k+8$, then $\rho(H)>\frac{k}{2}+\sqrt{n^{2}-(k+8)n+\frac{k^{2}}{4}+7k+23}.$\\
(ii) If $n\geq 3k+10$, then $q(H)> 2n-8+\frac{6k+16}{n-1}.$\\
(iii) If $n\geq 3k+9$, then $\rho(\overline{H})< \sqrt{\frac{(n-k)(3n-3k-11)}{n}}.$
\end{lem}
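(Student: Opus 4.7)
\textbf{Proof plan for Lemma \ref{le12}.} All three parts yield to the same strategy: compare $H$ (or its complement) to an explicit simpler graph whose spectral parameter is known in closed form, then reduce to a numerical inequality in $n$ and $k$. For part $(i)$, I observe that the subgraph of $H$ induced on $V(K_k)\cup V(K_{n-k-2})$ is precisely $K_{n-2}$, so by monotonicity of the adjacency spectral radius under (induced) subgraph containment, $\rho(H)\geq \rho(K_{n-2})=n-3$. It then suffices to show $n-3>\frac{k}{2}+\sqrt{n^2-(k+8)n+\frac{k^2}{4}+7k+23}$. The hypothesis $n\geq 2k+8$ makes $n-3-k/2>0$, so I may square; the $k^2/4$ terms cancel and the inequality collapses to $2n>4k+14$, which is exactly the hypothesis.

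For part $(ii)$, the same containment, now regarded as a spanning subgraph $K_{n-2}+2K_1\subseteq H$, together with edge-monotonicity of the signless Laplacian spectral radius (immediate from $x^{T}Q(G)x=\sum_{uv\in E(G)}(x_u+x_v)^2$ and the Rayleigh characterization of $q(G)$), gives $q(H)\geq q(K_{n-2}+2K_1)=q(K_{n-2})=2n-6$. The desired inequality $2n-6>2n-8+\frac{6k+16}{n-1}$ is equivalent to $n-1>3k+8$, which is the hypothesis $n\geq 3k+10$.

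For part $(iii)$, I first unpack $\overline{H}$ explicitly. Since $V(K_k)$ is joined to every other vertex of $H$, these $k$ vertices are isolated in $\overline{H}$; the two cliques $K_{n-k-2}$ and $K_2$ become independent sets, while the absence of edges between them in $H$ becomes a complete bipartite graph between them. Hence $\overline{H}\cong kK_1+K_{2,n-k-2}$ and $\rho(\overline{H})=\rho(K_{2,n-k-2})=\sqrt{2(n-k-2)}$. The claim then reduces to $2n(n-k-2)<(n-k)(3n-3k-11)$, and after expansion this becomes the polynomial inequality $n^2-(4k+7)n+3k^2+11k>0$. At $n=3k+9$ the left-hand side evaluates to $8k+18>0$, and since the quadratic is increasing on $[(4k+7)/2,\infty)$ and $3k+9>(4k+7)/2$, the inequality persists for all $n\geq 3k+9$.

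There is no serious obstacle in this plan, since none of the three parts requires computing an eigenvalue of the $3\times 3$ equitable quotient matrix of $A(H)$ or $Q(H)$. The only delicate point is tracking each threshold on $n$ carefully enough that every numerical inequality becomes strict exactly at the cutoff stated in the lemma; this is what forces the three different hypotheses $n\geq 2k+8$, $n\geq 3k+10$, and $n\geq 3k+9$.
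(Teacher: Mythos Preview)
Your proposal is correct and follows essentially the same route as the paper: for (i) and (ii) bound $\rho(H)$ and $q(H)$ below via the subgraph $K_{n-2}$ (with $\rho(K_{n-2})=n-3$ and $q(K_{n-2})=2n-6$), and for (iii) identify $\overline{H}\cong kK_{1}+K_{2,\,n-k-2}$ so that $\rho(\overline{H})=\sqrt{2(n-k-2)}$; the remaining numerical inequalities you verify are exactly what the paper asserts ``by direct calculation.'' Your write-up is slightly more explicit about the edge-monotonicity justification for $q$ and about checking the quadratic in (iii), but there is no substantive difference in method.
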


\begin{proof}
(i) Note that $K_{n-2}$ is a proper subgraph of $H.$ Then for $n\geq 2k+8$, we have
$$\rho(H)>\rho(K_{n-2})=n-3> \frac{k}{2}+\sqrt{n^{2}-(k+8)n+\frac{k^{2}}{4}+7k+23}.$$

(ii) For $n\geq 3k+10,$ by a direct calculation, we obtain that
$$q(H)>q(K_{n-2})=2n-6> 2n-8+\frac{6k+16}{n-1}.$$

(iii) Obviously, $\overline{H}\cong kK_{1}\cup[(n-k-2)K_{1}\vee 2K_{1}]$. For $n\geq 3k+9$, we have
$$\rho(\overline{H})=\rho(K_{2,n-k-2})=\sqrt{2(n-k-2)}<\sqrt{\frac{(n-k)(3n-3k-11)}{n}},$$ as desired.
\end{proof}

\begin{lem}\label{le13}
Let $H\cong K_{3}\vee (K_{n-5}+2K_{1}).$ \\
(i) If $n\geq9$, then $\rho(H)> 1+\sqrt{n^{2}-10n+38}.$\\
(ii) If $n\geq10$, then $q(H)> 2n-8+\frac{28}{n-1}.$ \\
(iii) If $n\geq17$, then $\rho(\overline{H})<\sqrt{\frac{(n-2)(3n-17)}{n}}.$
\end{lem}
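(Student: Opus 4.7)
The plan is to exploit the natural equitable three-block partition of $H = K_3 \vee (K_{n-5} + 2K_1)$ into $V(K_3)$, $V(K_{n-5})$, $V(2K_1)$ (plus a corresponding two-block partition of the nontrivial component of $\overline{H}$), so that Lemma~\ref{le15} reduces each spectral quantity to the spectral radius of a small quotient matrix. For (i), the adjacency quotient matrix is
$$R = \begin{pmatrix} 2 & n-5 & 2 \\ 3 & n-6 & 0 \\ 3 & 0 & 0 \end{pmatrix},$$
and since $A(H)$ is nonnegative and irreducible, Lemma~\ref{le15} gives $\rho(H) = \rho(R)$, the largest root of $p(x) = x^3 - (n-4)x^2 - (n+3)x + 6(n-6)$. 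I then show $p(x_0) < 0$ at $x_0 = 1 + \sqrt{n^2 - 10n + 38}$: using the identity $x_0^2 = 2x_0 + n^2 - 10n + 37$ repeatedly collapses $p(x_0)$ to the linear expression $(n^2 - 13n + 46)\,x_0 - (n-6)(n^2 - 10n + 31)$ in $x_0$; since $n^2 - 13n + 46 > 0$ and the constant dominates for $n \geq 9$, the inequality can be squared once to yield a polynomial inequality in $n$, verified at $n = 9$ together with a monotonicity argument for larger $n$.

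For (ii), the same partition is equitable for the signless Laplacian $Q(H) = D(H) + A(H)$ (each block is degree-homogeneous), with quotient matrix
$$R_Q = \begin{pmatrix} n+1 & n-5 & 2 \\ 3 & 2n-9 & 0 \\ 3 & 0 & 3 \end{pmatrix},$$
so $q(H) = \rho(R_Q)$. Writing the characteristic polynomial $p_Q(y)$ of $R_Q$ and setting $y_0 = 2n - 8 + \frac{28}{n-1}$, I evaluate $(n-1)^3 p_Q(y_0)$; clearing the rational denominator converts the desired inequality $q(H) > y_0$ into a polynomial inequality in $n$ to be checked directly for $n \geq 10$.

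For (iii), a direct expansion gives $\overline{H} = 3K_1 \cup (K_2 \vee (n-5)K_1)$, so $\rho(\overline{H}) = \rho(K_2 \vee (n-5)K_1)$. The two-block equitable partition of the nontrivial component has quotient matrix $\begin{pmatrix} 1 & n-5 \\ 2 & 0 \end{pmatrix}$, whose spectral radius is $\frac{1 + \sqrt{8n - 39}}{2}$. The desired bound becomes $\frac{1 + \sqrt{8n - 39}}{2} < \sqrt{\frac{(n-2)(3n-17)}{n}}$, which after isolating the square root and squaring twice reduces to a quartic inequality in $n$ to be verified for $n \geq 17$. The main obstacle throughout is the terminal polynomial verification; part (i) is the most delicate, because the naive subgraph lower bound $\rho(H) > \rho(K_{n-2}) = n - 3$ is insufficient for the small cases $n \in \{9, 10\}$, which forces the full quotient-matrix computation rather than a one-line subgraph comparison.
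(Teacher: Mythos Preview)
Your approach is correct and essentially identical to the paper's: the same equitable three-block partition $(V(K_3),V(K_{n-5}),V(2K_1))$ is used, your quotient matrices and characteristic polynomials agree with the paper's (the paper quotes $P_{R(A)}(x)=x^{3}-(n-4)x^{2}-(n+3)x+6n-36$ and $P_{R(Q)}(x)=x^{3}-(3n-5)x^{2}+(2n^{2}-n-24)x-6n^{2}+42n-72$ from \cite{Ao2022}, matching yours), and for (iii) both you and the paper arrive at $\rho(\overline{H})=\frac{1+\sqrt{8n-39}}{2}$ and finish by an elementary inequality check. Your explicit reduction of $p(x_0)$ to $(n^{2}-13n+46)x_0-(n-6)(n^{2}-10n+31)$ spells out what the paper dispatches with ``By Maple''; note that for a monic cubic the implication $p(x_0)<0\Rightarrow x_0<\rho(H)$ is automatic, so no separate monotonicity argument is needed.
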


\begin{proof}
(i) Let $R(A)$ be an equitable quotient matrix of the adjacency matrix $A(H)$ with respect to the partition $(V(K_{3}), V(K_{n-5}), V(2K_{1})).$
In the proof of Theorem 4.2 \cite{Ao2022}, we known that the characteristic polynomial of $R(A)$ is $P_{R(A)}(x)=x^{3}-(n-4)x^{2}-(n+3)x+6n-36,$
and $P_{R(A)}(x)$ is a monotonically increasing function on $[\frac{n-4+\sqrt{n^{2}-5n+25}}{3}, +\infty).$
Note that $\rho(H)=\lambda_{1}(R(A))>\frac{n-4+\sqrt{n^{2}-5n+25}}{3}$ and
$$1+\sqrt{n^{2}-10n+38}>\frac{n-4+\sqrt{n^{2}-5n+25}}{3}.$$
By Maple, $P_{R(A)}(1+\sqrt{n^{2}-10n+38})<0=P_{R(A)}(\rho(H))$
for $n\geq 9$. This implies that $\rho(H)>1+\sqrt{n^{2}-10n+38}$.

(ii) Let $R(Q)$ be an equitable quotient matrix of the signless Laplacian matrix $Q(H)$
with respect to the partition $(V(K_{3}), V(K_{n-5}), V(2K_{1})).$
In the proof of Theorem 4.7 \cite{Ao2022}, the characteristic polynomial of $R(Q)$ is $P_{R(Q)}(x)=x^{3}-(3n-5)x^{2}+(2n^{2}-n-24)x-6n^{2}+42n-72$, and
$P_{R(Q)}(x)$ is a monotonically increasing function on $[\frac{3n-5+\sqrt{3n^{2}-27n+97}}{3}, +\infty).$
Note that $q(H)>\frac{3n-5+\sqrt{3n^{2}-27n+97}}{3}$ and
$$2n-8+\frac{28}{n-1}>\frac{3n-5+\sqrt{3n^{2}-27n+97}}{3}.$$
By a simple calculation, we have
$P_{R(Q)}(2n-8+\frac{28}{n-1})<0=P_{R(Q)}(q(H))$ for $n\geq 10$.
Hence, $q(H)>2n-8+\frac{28}{n-1}$.

(iii) We have $\overline{H}\cong 3K_{1}\cup[(n-5)K_{1}\vee K_{2}].$
Let $RC(A)$ be an equitable quotient matrix of the adjacency matrix $A(\overline{H})$
with respect to the partition $(V(3K_{1}), V((n-5)K_{1}), V(K_{2})).$
One can see that
\begin{equation}
RC(A)=       
\left(                 
  \begin{array}{ccc}   
    0 &  0  & 0\\  
    0 &  0  & 2\\  
    0 & n-5 & 1\\  
  \end{array}\nonumber
\right).                 
\end{equation}
Then the characteristic polynomial of $RC(A)$ is given by $P_{RC(A)}(x)=x(x^{2}-x-2n+10).$
By a direct calculation,
$\rho(\overline{H})=\frac{1+\sqrt{8n-39}}{2}< \sqrt{\frac{(n-2)(3n-17)}{n}}$
for $n\geq 17$.
\end{proof}

\begin{lem}\label{le14}
Let $H\cong K_{4}\vee (K_{n-7}+3K_{1}).$ \\
(i) If $n\geq 9$, then $\rho(H)< 1+\sqrt{n^{2}-10n+38}.$\\
(ii) If $n\geq 9$, then $q(H)< 2n-8+\frac{28}{n-1}.$\\
(iii) If $n\geq 7$, then $\rho(\overline{H})> \sqrt{\frac{(n-2)(3n-17)}{n}}.$
\end{lem}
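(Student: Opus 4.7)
\medskip
\noindent \textbf{Proof plan for Lemma~\ref{le14}.}
The plan is to treat (i) and (ii) together via the equitable partition of $V(H)$ into $(V(K_{4}), V(K_{n-7}), V(3K_{1}))$, mirroring the strategy used in the proof of Lemma~\ref{le13}. This partition is equitable for both $A(H)$ and $Q(H)$, so by Lemma~\ref{le15} the quantities $\rho(H)$ and $q(H)$ are the spectral radii of the $3\times 3$ quotient matrices
$$R(A) = \begin{pmatrix} 3 & n-7 & 3 \\ 4 & n-8 & 0 \\ 4 & 0 & 0 \end{pmatrix}, \qquad R(Q) = \begin{pmatrix} n+2 & n-7 & 3 \\ 4 & 2n-12 & 0 \\ 4 & 0 & 4 \end{pmatrix}.$$

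For (i), expanding the determinant gives $P_{R(A)}(x) = x^{3} - (n-5)x^{2} - (n+8)x + 12(n-8)$. I first locate the larger critical point $\gamma = \frac{n-5+\sqrt{n^{2}-7n+49}}{3}$ of $P_{R(A)}$ so that the polynomial is monotonically increasing on $[\gamma,\infty)$, and verify that both $\alpha := 1 + \sqrt{n^{2}-10n+38}$ and $\rho(H) \geq \rho(K_{n-3}) = n-4$ exceed $\gamma$ for $n\geq 9$. Using the identity $\alpha^{2} = 2\alpha + n^{2}-10n+37$ (and the induced formula for $\alpha^{3}$) to reduce powers, $P_{R(A)}(\alpha)$ collapses to a linear expression in $\alpha$ of the shape $(n^{2}-13n+43)\,\alpha + g(n)$ with $g$ an explicit cubic. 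Isolating the irrational part and squaring converts $P_{R(A)}(\alpha) > 0$ into a polynomial inequality in $n$ that can be verified for $n \geq 9$. Since $P_{R(A)}(\rho(H)) = 0$ and $P_{R(A)}$ is increasing past $\gamma$, this forces $\alpha > \rho(H)$, which is (i).

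For (ii), the argument is structurally identical. Expanding the determinant yields $P_{R(Q)}(x) = x^{3} - (3n-6)x^{2} + (2n^{2}-48)x - 8n^{2} + 72n - 160$. Setting $\beta = 2n-8+\frac{28}{n-1} = \frac{2n^{2}-10n+36}{n-1}$, I check that $\beta$ lies past the larger critical point of $P_{R(Q)}$, and then verify $P_{R(Q)}(\beta) > 0$ by clearing the denominator $(n-1)^{3}$ and showing positivity of the resulting polynomial in $n$ for $n \geq 9$. As in (i), this gives $\beta > q(H)$.

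For (iii), I compute $\overline{H}$ directly: the four vertices of $K_{4}$ become isolated, the $n-7$ vertices of $K_{n-7}$ become independent in $\overline{H}$, the three vertices of $3K_{1}$ span a $K_{3}$ in $\overline{H}$, and every vertex of the former group is joined to every vertex of the latter. Thus $\overline{H} = 4K_{1} \cup (K_{3} \vee (n-7)K_{1})$, and $\rho(\overline{H}) = \rho(K_{3} \vee (n-7)K_{1})$. The partition $(V(K_{3}), V((n-7)K_{1}))$ is equitable with quotient matrix $\bigl(\begin{smallmatrix} 2 & n-7 \\ 3 & 0 \end{smallmatrix}\bigr)$ and characteristic polynomial $x^{2} - 2x - (3n-21)$, so $\rho(\overline{H}) = 1 + \sqrt{3n-20}$. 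Squaring the desired inequality $1 + \sqrt{3n-20} > \sqrt{(n-2)(3n-17)/n}$ reduces, after clearing denominators, to $2n\sqrt{3n-20} > 34 - 4n$, which is trivial when $n \geq 9$ (the right-hand side is non-positive) and easy to verify directly for $n = 7, 8$.

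The principal obstacle will be the cubic algebra in (i) and (ii): after substituting $\alpha$ or $\beta$, one must manage a bulky polynomial-in-$n$ positivity check, and must first pin down the correct monotonicity interval of $P_{R(A)}$ and $P_{R(Q)}$ so that the sign test at $\alpha$ or $\beta$ genuinely certifies the bound on the spectral radius. Part (iii) is comparatively painless because $\overline{H}$ splits off an isolated part and the remaining piece admits a $2\times 2$ quotient.
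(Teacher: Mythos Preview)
Your proposal is correct and follows essentially the same approach as the paper: the same equitable partition $(V(K_4),V(K_{n-7}),V(3K_1))$ yielding the same quotient matrices $R(A)$ and $R(Q)$, the same monotonicity-past-the-larger-critical-point argument, and for (iii) the same identification $\overline{H}\cong 4K_1\cup\bigl((n-7)K_1\vee K_3\bigr)$ giving $\rho(\overline{H})=1+\sqrt{3n-20}$. The only cosmetic differences are that the paper invokes Maple where you carry out the substitution-and-squaring algebra by hand, and that the paper writes the quotient in (iii) as a $3\times 3$ matrix (with a zero row/column for the isolated part) rather than your equivalent $2\times 2$ quotient on the connected component.
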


\begin{proof}

(i) Let $R(A)$ be an equitable quotient matrix of the adjacency matrix $A(H)$ with respect to the partition $(V(K_{4}), V(K_{n-7}), V(3K_{1})).$
One can see that
\begin{equation}
R(A)=       
\left(                 
  \begin{array}{ccc}   
    3 & n-7 & 3\\  
    4 & n-8 & 0\\  
    4 & 0   & 0\\  
  \end{array}\nonumber
\right).                 
\end{equation}
Then the characteristic polynomial of $R(A)$ is given by $P_{R(A)}(x)=x^{3}-(n-5)x^{2}-(n+8)x+12n-96.$
By Lemma \ref{le15}, we know that $\rho(H)=\lambda_{1}(R(A))$ is the largest root of the equation $P_{R(A)}(x)=0.$
Let $P_{R(A)}^{'}(x)=3x^{2}-2(n-5)x-n-8=0.$ We can solve this equation to obtain that
$$x_{1}=\frac{n-5-\sqrt{n^{2}-7n+49}}{3}~~ \mbox{and} ~~x_{2}=\frac{n-5+\sqrt{n^{2}-7n+49}}{3}.$$
Then $P_{R(A)}(x)$ is a monotonically increasing function on $[x_{2}, +\infty).$
Note that $\rho(H)=\lambda_{1}(R(A))>x_{2}$ and
$1+\sqrt{n^{2}-10n+38}>x_{2}.$
By Maple,
$P_{R(A)}(1+\sqrt{n^{2}-10n+38})>0=P_{R(A)}(\rho(H))$
for $n\geq 9$. This implies that $\rho(H)<1+\sqrt{n^{2}-10n+38}.$

(ii) Let $R(Q)$ be an equitable quotient matrix of the signless Laplacian matrix $Q(H)$
with respect to the partition $(V(K_{4}), V(K_{n-7}), V(3K_{1})).$ Then
\begin{equation}
R(Q)=       
\left(                 
  \begin{array}{ccc}   
    n+2 & n-7   & 3\\  
    4   & 2n-12 & 0\\  
    4   & 0     & 4\\  
  \end{array}\nonumber
\right).          
\end{equation}
Then the characteristic polynomial of $R(Q)$ is given by $P_{R(Q)}(x)=x^{3}-3(n-2)x^{2}+(2n^{2}-48)x-8n^{2}+72n-160.$
By Lemma \ref{le15}, we have $q(H)=\lambda_{1}(R(Q))$ is the largest root of the equation $P_{R(Q)}(x)=0.$
Let $P_{R(Q)}^{'}(x)=3x^{2}-6(n-2)x+2n^{2}-48=0.$
The two roots $x_{1}$ and $x_{2}$ of this equation are as follows:
$$x_{1}=\frac{3n-6-\sqrt{3n^{2}-36n+180}}{3}~~ \mbox{and} ~~x_{2}=\frac{3n-6+\sqrt{3n^{2}-36n+180}}{3}.$$
Then $P_{R(Q)}(x)$ is a monotonically increasing function on $[x_{2}, +\infty).$
Note that $q(H)>x_{2}$ and $2n-8+\frac{28}{n-1}>x_{2}$.
By a simple calculation, we have
$P_{R(Q)}(2n-8+\frac{28}{n-1})>0=P_{R(Q)}(q(H))$ for $n\geq 9$.
Hence $q(H)<2n-8+\frac{28}{n-1}.$

(iii) It is easy to see that $\overline{H}\cong 4K_{1}\cup[(n-7)K_{1}\vee K_{3}]$.
Let $RC(A)$ be an equitable quotient matrix of the adjacency matrix $A(\overline{H})$
with respect to the partition $(V(4K_{1}), V((n-7)K_{1}), V(K_{3})).$
One can see that
\begin{equation}
RC(A)=       
\left(                 
  \begin{array}{ccc}   
    0 &  0  & 0\\  
    0 &  0  & 3\\  
    0 & n-7 & 2\\  
  \end{array}\nonumber
\right).                 
\end{equation}
Then the characteristic polynomial of $RC(A)$ is given by $P_{RC(A)}(x)=x(x^{2}-2x-3n+21).$
By a direct calculation, we have
$\rho(\overline{H})=1+\sqrt{3n-20}> \sqrt{\frac{(n-2)(3n-17)}{n}}$
for $n\geq 7$.
\end{proof}

Ao, Liu, Yuan and Li \cite{Ao2022} presented sufficient conditions to guarantee a graph to be $k$-leaf-connected
in terms of the (signless Laplacian) spectral radius of $G$ or its complement.

\begin{thm}[Ao, Liu, Yuan and Li \cite{Ao2022}]\label{th4}
Let $G$ be a connected graph of order $n$ and minimum degree $\delta \geq k+1$, where $2\leq k\leq n-4.$ Then\\
(i) If $\rho(G)\geq \frac{k}{2}+\sqrt{n^{2}-(k+6)n+\frac{k^{2}}{4}+5k+11},$
then $G$ is $k$-leaf-connected unless $G\in \{K_{3}\vee3K_{1}, K_{4}\vee4K_{1}\}.$\\
(ii) If $q(G)\geq 2n-6+\frac{4k+6}{n-1},$
then $G$ is $k$-leaf-connected unless $G\cong K_{4}\vee 4K_{1}.$\\
(iii) If $\rho(\overline{G})\leq \sqrt{\frac{(n-k)(2n-2k-5)}{n}},$
then $G$ is $k$-leaf-connected.
\end{thm}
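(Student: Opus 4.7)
The plan is to translate each of the three spectral hypotheses into the common edge count hypothesis $e(G) \geq \binom{n-2}{2} + 2k + 2$ of Theorem \ref{th2}, apply that theorem, and then exclude all but the stated exceptions by explicit spectral computation.

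For part (i), I would start from Lemma \ref{le8}: $\rho(G) \leq \tfrac{\delta-1}{2} + \sqrt{2e(G) - \delta n + (\delta+1)^2/4}$. By Proposition \ref{pro9} this upper bound is decreasing in $\delta$ on $[0,n-1]$, so the hypothesis $\delta \geq k+1$ lets us replace $\delta$ by $k+1$ and obtain $\rho(G) \leq \tfrac{k}{2} + \sqrt{2e(G) - (k+1)n + (k+2)^2/4}$. Pairing this with the assumed lower bound $\rho(G) \geq \tfrac{k}{2} + \sqrt{n^2 - (k+6)n + k^2/4 + 5k + 11}$, squaring, and simplifying yields $2e(G) \geq n^2 - 5n + 4k + 10$, i.e., precisely $e(G) \geq \binom{n-2}{2} + 2k + 2$. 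Theorem \ref{th2} then forces $G$ to be either $k$-leaf-connected or one of nine explicit exception graphs. For each such candidate one computes $\rho$ directly---typically via an equitable quotient matrix with the natural partition (Lemma \ref{le15})---and checks whether the hypothesized lower bound is actually met; only $K_3 \vee 3K_1$ and $K_4 \vee 4K_1$ should survive. Part (ii) runs identically with Lemma \ref{le10} in place of Lemma \ref{le8}: from $q(G) \leq 2e(G)/(n-1)+n-2$ and $q(G) \geq 2n-6+(4k+6)/(n-1)$ we get $2e(G) \geq (n-4)(n-1)+4k+6 = n^2 - 5n + 4k + 10$, again $e(G) \geq \binom{n-2}{2} + 2k + 2$, and a check of the signless Laplacian spectral radius of each of the nine exception graphs leaves only $K_4 \vee 4K_1$.

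Part (iii) is the main obstacle. The naive estimate $\rho(\overline{G}) \geq 2e(\overline{G})/n$, when squared, yields only $e(\overline{G}) = O(n^{3/2})$, whereas we need the linear bound $e(\overline{G}) \leq 2n-2k-5$. The natural remedy is to argue by contrapositive: assume $G$ is not $k$-leaf-connected and not in the exception list of Theorem \ref{th2}, so $e(\overline{G}) \geq 2n-2k-4$, and combine this with Lemma \ref{le16}, which forces $\overline{G}$ to have an explicit block structure---at least $i-k+1$ vertices of degree $\geq n-1-i$ and at least $n-i$ vertices of degree $\geq i-k+1$ for some $k \leq i \leq (n+k-2)/2$. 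Feeding this degree information into a sharper lower bound such as Hofmeister's inequality $\rho(\overline{G})^2 \geq \sum d_i(\overline{G})^2/n$ should push $\rho(\overline{G})^2$ strictly above $(n-k)(2n-2k-5)/n$ and contradict the hypothesis; the remaining work is the same spectral bookkeeping on the exception list, now via $\rho(\overline{\cdot})$ for each exceptional graph. The delicate point is constructing the sharp degree-sequence-based lower bound for $\rho(\overline{G})$, since the Cauchy--Schwarz level bound is not tight enough, while the first two parts are essentially mechanical once the edge count is extracted.
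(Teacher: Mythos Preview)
Your plan for parts (i) and (ii) is exactly right and matches the template used in the paper (for the analogous Theorem~\ref{main2}; Theorem~\ref{th4} itself is quoted from \cite{Ao2022} without proof here). The Hong--Shu--Fang/Nikiforov bound with Proposition~\ref{pro9}, respectively the Feng--Yu bound, convert the spectral hypotheses into $e(G)\ge\binom{n-2}{2}+2k+2$, and then Theorem~\ref{th2} plus quotient-matrix checks on the nine exceptions finish the job.

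For part (iii), however, you are missing the key idea and have talked yourself into an unnecessarily complicated route. The right move is to pass first to the closure $H=C_{n+k-1}(G)$. Because $H$ is $(n+k-1)$-closed, every edge $uv$ of $\overline{H}$ satisfies $d_H(u)+d_H(v)\le n+k-2$, hence $d_{\overline{H}}(u)+d_{\overline{H}}(v)\ge n-k$. Now use the identity $\sum_i d_{\overline{H}}(v_i)^2=\sum_{uv\in E(\overline{H})}\bigl(d_{\overline{H}}(u)+d_{\overline{H}}(v)\bigr)$: this gives $\sum_i d_{\overline{H}}(v_i)^2\ge (n-k)\,e(\overline{H})$, and combining with the Rayleigh-type bound $\rho(\overline{H})^2\ge \tfrac{1}{n}\sum_i d_{\overline{H}}(v_i)^2$ yields
\[
\rho(\overline{H})\ \ge\ \sqrt{\frac{(n-k)\,e(\overline{H})}{n}}.
\]
Since $\overline{H}\subseteq\overline{G}$ we have $\rho(\overline{H})\le\rho(\overline{G})\le\sqrt{(n-k)(2n-2k-5)/n}$, whence $e(\overline{H})\le 2n-2k-5$ and $e(H)\ge\binom{n-2}{2}+2k+2$. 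Now apply Theorem~\ref{th2} to $H$ (which is legitimate since $H$ is $k$-leaf-connected iff $G$ is, by Lemma~\ref{le111}). This is precisely the argument the paper uses in the proof of Theorem~\ref{main2}(iii), citing \cite{Ao2022} for this inequality. Your proposed detour through Lemma~\ref{le16} and a sharpened Hofmeister bound on $\overline{G}$ itself is both harder and unnecessary; the ``delicate point'' dissolves once you work in the closure.
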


In this paper, we improve the above result as follows.

\begin{thm}\label{main2}
Let $G$ be a connected graph of order $n\geq k+17$ and minimum degree $\delta \geq k+1$, where $k\geq 2.$
If one of the following holds,\\
(i) $\rho(G)\geq \frac{k}{2}+\sqrt{n^{2}-(k+8)n+\frac{k^{2}}{4}+7k+23},$\\
(ii) $q(G)\geq 2n-8+\frac{6k+16}{n-1},$\\
(iii) $\rho(\overline{G})\leq \sqrt{\frac{(n-k)(3n-3k-11)}{n}},$\\
then $G$ is $k$-leaf-connected unless $C_{n+k-1}(G)\in\{ K_{k}\vee (K_{n-k-2}+K_{2}), K_{3}\vee (K_{n-5}+2K_{1})\}$.
\end{thm}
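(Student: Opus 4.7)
The plan is to reduce each spectral hypothesis (i)--(iii) to the edge condition $e(G)\ge\binom{n-3}{2}+3k+5$ of Theorem \ref{main}, apply that theorem to pin the $(n+k-1)$-closure $H:=C_{n+k-1}(G)$ to one of three explicit graphs, and then rule out the single graph on this list that does not appear in the stated exceptional set.

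For (i), I would substitute the assumed lower bound on $\rho(G)$ into Lemma \ref{le8}, with the parameter $\delta$ weakened down to $k+1$ (justified by the monotonicity in Proposition \ref{pro9}), and square out; using the identity $2\binom{n-3}{2}+6k+10 = n^2-7n+6k+22$, the inequality $e(G)\ge\binom{n-3}{2}+3k+5$ falls out immediately. For (ii), a direct inversion of the Feng-Yu bound (Lemma \ref{le10}) applied to $q(G)\ge 2n-8+(6k+16)/(n-1)$ yields the same edge bound. For (iii), I need to derive $e(\overline G)\le 3n-3k-11$ from $\rho(\overline G)\le\sqrt{(n-k)(3n-3k-11)/n}$; my intended approach is to combine a Hofmeister-type bound $\rho(\overline G)^2\ge n^{-1}\sum_v d_{\overline G}(v)^2$ with the degree cap $\Delta(\overline G)\le n-k-2$ forced by $\delta(G)\ge k+1$, possibly sharpened by the degree-sequence information Lemma \ref{le16} provides when $G$ fails to be $k$-leaf-connected.

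With the edge bound in place, Theorem \ref{main} gives $H\in\{K_k\vee(K_{n-k-2}+K_2),\,K_3\vee(K_{n-5}+2K_1),\,K_4\vee(K_{n-7}+3K_1)\}$, and only the last needs to be excluded to match the stated exceptions. If $k\ge 4$, then $\delta(H)=4<k+1\le\delta(G)\le\delta(H)$, contradiction. If $k\in\{2,3\}$, I invoke the monotonicities $\rho(G)\le\rho(H)$, $q(G)\le q(H)$, and $\rho(\overline G)\ge\rho(\overline H)$ coming from $G\subseteq H$, and pair each of these with the corresponding part of Lemma \ref{le14}. The elementary inequalities to check are: $\tfrac{k}{2}+\sqrt{n^2-(k+8)n+k^2/4+7k+23}\ge 1+\sqrt{n^2-10n+38}$ (equality at $k=2$, strict for $k=3$ when $n\ge k+17$); $6k+16\ge 28$ (automatic for $k\ge 2$); and $(n-k)(3n-3k-11)\le(n-2)(3n-17)$ (equality at $k=2$, strict for $k\ge 3$). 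Each yields a direct contradiction with the relevant hypothesis when composed with Lemma \ref{le14}.

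The principal obstacle is step 1(iii): converting the spectral bound on $\rho(\overline G)$ into the edge bound on $e(\overline G)$, since the naive average-degree bound $\rho(\overline G)\ge 2e(\overline G)/n$ is insufficient in this regime and a sharper inequality exploiting both $\Delta(\overline G)\le n-k-2$ and the structure forced by $G$ not being $k$-leaf-connected is required. All other steps reduce to direct substitutions into Lemmas \ref{le8}--\ref{le10} and short numerical verifications using Lemmas \ref{le12}--\ref{le14}.
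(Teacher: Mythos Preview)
Your treatment of parts (i) and (ii) matches the paper exactly: substitute into Lemma~\ref{le8}/Proposition~\ref{pro9} and Lemma~\ref{le10} respectively, read off $e(G)\ge\binom{n-3}{2}+3k+5$, invoke Theorem~\ref{main}, and kill $K_4\vee(K_{n-7}+3K_1)$ via Lemma~\ref{le14}. Your case split on $k$ for the exclusion step is more explicit than the paper (which tacitly uses that this exception only arises for $k=2$), but the substance is the same.

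The genuine gap is part (iii), and you have correctly flagged it. Your plan to bound $e(\overline{G})$ directly from the Hofmeister inequality plus $\Delta(\overline{G})\le n-k-2$ does not work: $\rho^2\ge n^{-1}\sum d_i^2$ gives a \emph{lower} bound on $\rho(\overline{G})$ in terms of degree data, and without further structure you cannot extract $e(\overline{G})\le 3n-3k-11$ from the assumed \emph{upper} bound on $\rho(\overline{G})$. The paper's key move, which you are missing, is to pass to the closure first. Set $H=C_{n+k-1}(G)$; then $\overline{H}\subseteq\overline{G}$ gives $\rho(\overline{H})\le\rho(\overline{G})$, and because $H$ is $(n+k-1)$-closed and not $k$-leaf-connected one has (as in the proof of Theorem~4.4 of \cite{Ao2022}) the structural inequality
\[
\rho(\overline{H})\ \ge\ \sqrt{\tfrac{(n-k)\,e(\overline{H})}{n}}.
\]
Chaining these yields $e(\overline{H})\le 3n-3k-11$, hence $e(H)\ge\binom{n-3}{2}+3k+5$, and now Theorem~\ref{main} is applied to $H$ (using $C_{n+k-1}(H)=H$), not to $G$. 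The point is that the inequality relating $\rho$ and $e$ in the complement is only available for the closed graph $H$, where the failure of $k$-leaf-connectedness forces the degree-sequence structure of Lemma~\ref{le16} in a usable form; attempting it for $G$ directly, as you propose, does not go through.
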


\medskip
\noindent\textbf{Proof.}
Suppose, to the contrary, that $G$ is not $k$-leaf-connected.

(i) By Lemma \ref{le8} and Proposition \ref{pro9}, we have
  $$\rho(G)\leq\frac{\delta-1}{2}+\sqrt{2e(G)-\delta n+\frac{(\delta+1)^{2}}{4}}\leq\frac{k}{2}+\sqrt{2e(G)-(k+1)n+\frac{k^{2}}{4}+k+1}.$$
Since $\rho(G)\geq \frac{k}{2}+\sqrt{n^{2}-(k+8)n+\frac{k^{2}}{4}+7k+23}$, we have
$e(G)\geq {n-3\choose 2}+3k+5.$ Let $H=C_{n+k-1}(G).$
By Theorem \ref{main}, we have $H\in \{K_{k}\vee (K_{n-k-2}+K_{2}), K_{3}\vee (K_{n-5}+2K_{1}), K_{4}\vee(K_{n-7}+3K_{1})\}.$
Assume that $H\cong K_{4}\vee(K_{n-7}+3K_{1})$. According to (i) of Lemma \ref{le14}, $\rho(G)\leq\rho(H)<1+\sqrt{n^{2}-10n+38},$ a contradiction.
For $H\in \{K_{k}\vee (K_{n-k-2}+K_{2}), K_{3}\vee (K_{n-5}+2K_{1})$ and $n\geq k+17,$ by (i) of Lemmas \ref{le12} and \ref{le13},
we can not compare completely $\rho(G)$ with $\frac{k}{2}+\sqrt{n^{2}-(k+8)n+\frac{k^{2}}{4}+7k+23}.$
For the brevity of discussion, we have $C_{n+k-1}(G)=H\in\{ K_{k}\vee (K_{n-k-2}+K_{2}), K_{3}\vee (K_{n-5}+2K_{1})\}.$

(ii) By Lemma \ref{le10}, we have $q(G)\leq \frac{2e(G)}{n-1}+n-2.$
Note that $q(G)\geq 2n-8+\frac{6k+16}{n-1}.$
Then $e(G)\geq {n-3\choose 2}+3k+5.$ Let $H=C_{n+k-1}(G).$ By Theorem \ref{main}, we have $H\in \{K_{k}\vee (K_{n-k-2}+K_{2}), K_{3}\vee (K_{n-5}+2K_{1}), K_{4}\vee(K_{n-7}+3K_{1})\}.$
Suppose that $H\cong K_{4}\vee(K_{n-7}+3K_{1})$. By (ii) of Lemma \ref{le14},
$q(G)\leq q(H)<2n-8+\frac{28}{n-1},$ a contradiction.
Therefore, $C_{n+k-1}(G)=H\in\{ K_{k}\vee (K_{n-k-2}+K_{2}), K_{3}\vee (K_{n-5}+2K_{1})\}.$

(iii) Let $H=C_{n+k-1}(G)$. Similar to the proof of Theorem 4.4 in \cite{Ao2022}, we can obtain that
$$\rho(\overline{H})\geq \sqrt{\frac{(n-k)e(\overline{H})}{n}}.$$
Note that $\overline{H}\subseteq \overline{G}.$
Then we have
  $$\rho(\overline{H})\leq \rho(\overline{G})\leq \sqrt{\frac{(n-k)(3n-3k-11)}{n}},$$
and therefore, $$\sqrt{\frac{(n-k)e(\overline{H})}{n}}\leq \rho(\overline{H})\leq \rho(\overline{G})\leq \sqrt{\frac{(n-k)(3n-3k-11)}{n}}.$$
It is easy to check that $e(\overline{H})\leq 3n-3k-11$ and
  $$e(H)={n\choose 2}-e(\overline{H})\geq {n-3\choose 2}+3k+5. $$
Applying Theorem \ref{main} on $H$, we have $C_{n+k-1}(H)=H\in \{K_{k}\vee (K_{n-k-2}+K_{2}), K_{3}\vee (K_{n-5}+2K_{1}), K_{4}\vee(K_{n-7}+3K_{1})\}.$
Assume that $H\cong K_{4}\vee(K_{n-7}+3K_{1})$. By (iii) of Lemma \ref{le14}, $\rho(\overline{G})\geq\rho(\overline{H})>\sqrt{\frac{(n-2)(3n-17)}{n}},$
a contradiction. Hence $C_{n+k-1}(G)=H\in\{ K_{k}\vee (K_{n-k-2}+K_{2}), K_{3}\vee (K_{n-5}+2K_{1})\}.$
This completes the proof of Theorem \ref{main2}.  \hspace*{\fill}$\Box$

\section*{Declaration of competing interest}
The authors declare that they have no conflict of interest.

\section*{Acknowledgement}
We would like to thank the anonymous referees for their valuable suggestions that improved the presentation of this paper.


\begin{thebibliography}{99}
\setlength{\itemsep}{0pt}

\bibitem{Ao2022} G.Y. Ao, R.F. Liu, J.J. Yuan, R. Li, Improved sufficient conditions for $k$-leaf-connected graphs,
                 \emph{Discrete Appl. Math.} \textbf{314} (2022) 17--30.
\bibitem{Bondy1976} J.A. Bondy, V. Chvatal, A method in graph theory, \emph{Discrete Math.} \textbf{15} (1976) 111--135.

\bibitem{Bondy2008} J.A. Bondy, U.S.R. Murty, Graph Theory, Grad. Texts in Math. vol. 244, Springer, New York, 2008.

\bibitem{Brouwer2011} A.E. Brouwer, W.H. Haemers, Spectra of Graphs, Springer, Berlin, 2011.

\bibitem{Chen2018} M.Z. Chen, X.D. Zhang, The number of edges, spectral radius and Hamilton-connectedness of graphs,
                   \emph{J. Comb. Optim.} \textbf{35} (2018) 1104--1127.

\bibitem{Erd1989} P. Erd\H{o}s, A. Hajnal, Ramsey-type theorems, \emph{Discrete Appl. Math.} \textbf{25} (1989) 37--52.

\bibitem{Feng2009} L.H. Feng, G.H. Yu, On three conjectures involving the signless Laplacian spectral radius of graphs,
                   \emph{Publ. Inst. Math. (Beograd)} \textbf{85} (2009) 35-38.

\bibitem{Godsil2001} C.D. Godsil, G. Royle, Algebraic Graph Theory, Grad. Texts in Math. vol. 207, Springer, New York, 2001.

\bibitem{Gurgel1986} M.A. Gurgel, Y. Wakabayashi, On $k$-leaf-connected graphs,
                     \emph{J. Combin. Theory Ser. B} \textbf{41} (1986) 1--16.

\bibitem{Haemers1995} W.H. Haemers, Interlacing eigenvalues and graphs, \emph{Linear Algebra Appl.} \textbf{226-228} (1995) 593--616.

\bibitem{Hong2001} Y. Hong, J.L. Shu, K.F. Fang, A sharp upper bound of the spectral radius of graphs,
                   \emph{J. Comb. Theory Ser. B} \textbf{81} (2001) 177--183.

\bibitem{Nikiforov2002} V. Nikiforov, Some inequalities for the largest eigenvalue of a graph,
                      \emph{ Comb. Probab. Comput.} \textbf{11} (2002) 179--189.

\bibitem{Ore1960} O. Ore, Note on Hamilton circuits, \emph{Amer. Math. Monthly} \textbf{67} (1960) 55.

\bibitem{Wei2019} J. Wei, Z.F. You, H.J. Lai, Spectral analogues of Erd\H{o}s' theorem on Hamilton-connected graphs,
                 \emph{Appl. Math. Comput.} \textbf{340} (2019) 242-250.

\bibitem{Xu2021} Y. Xu, M.Q. Zhai, B. Wang, An improvement of spectral conditions for Hamilton-connected graphs,
                 \emph{Linear Multilinear Algebra} DOI: 10.1080/03081087.2021.1946465.

\bibitem{Yu2013} G.D. Yu, Y.Z. Fan, Spectral conditions for a graph to be Hamilton-connected,
                 \emph{Appl. Mech. Mater.} \textbf{336-338} (2013) 2329--2334.

\bibitem{Zhou2017} Q.N. Zhou, L.G. Wang, Some sufficient spectral conditions on Hamilton-connected and traceable graphs,
                   \emph{Linear Multilinear Algebra} \textbf{65} (2017) 224--234.

\bibitem{Zhou} Q.N. Zhou, L.G. Wang, Y. Lu, Sufficient conditions for Hamilton-connected graphs in terms of (signless Laplacian) spectral radius,
               \emph{Linear Algebra Appl.} \textbf{594} (2020) 205--225.

\bibitem{Zhou2020} Q.N. Zhou, L.G. Wang, Y. Lu, Signless Laplacian spectral conditions for Hamilton-connected graphs with large minimum degree,
                   \emph{Linear Algebra Appl.} \textbf{592} (2020) 48--64.
\end{thebibliography}
\end{document}